\theoremstyle{plain}
\newtheorem{theorem}{Theorem}[section]
\newtheorem{lemma}[theorem]{Lemma}
\newtheorem{proposition}[theorem]{Proposition}
\theoremstyle{definition}
\newtheorem{definition}[theorem]{Definition}
\newtheoremstyle{TheoremNum}
	{\topsep}{\topsep}              
  {\itshape}                      
  {}                              
  {\bfseries}                     
  {.}                             
  { }                             
  {\thmname{#1}\thmnote{ \bfseries #3}}
\newtheorem{remark}{Remark}
\newcommand{\F}{\mathbb F}
\newcommand{\K}{\mathbb K}
\newcommand{\Z}{\mathbb Z}
\newcommand{\cG}{\mathcal G}
\newcommand{\cH}{\mathcal H}
\newcommand{\cS}{\mathcal S}
\newcommand{\cM}{\mathcal M}
\newcommand{\cT}{\mathcal T}
\newcommand{\bv}{\mathbf v}
\newcommand{\cC}{\mathcal C}
\newcommand{\Aut}{\mathrm{Aut}}
\newcommand{\Tr}{ \ensuremath{ \mathrm{Tr}}}
\newcommand{\RN}[1]{%
  \textup{\uppercase\expandafter{\romannumeral#1}}%
}
\newcommand{\rn}[1]{%
  \textup{\lowercase\expandafter{\romannumeral#1}}%
}
 \def\zhou#1 {\fbox {\footnote {\ }}\ \footnotetext { From Yue: {\color{red}#1}}}
 \def\chen#1 {\fbox {\footnote {\ }}\ \footnotetext { From Rocco: {\color{blue}#1}}}
\begin{document}
	\title{Generalized Twisted Gabidulin codes}
	\author[G. Lunardon]{Guglielmo Lunardon\textsuperscript{\,1}}
	\address{\textsuperscript{1}Dipartimento di Mathematica e Applicazioni ``R. Caccioppoli", Universit\`{a} degli Studi di Napoli ``Federico \RN{2}", I-80126 Napoli, Italy}
	\email[G. Lunardon]{lunardon@unina.it}
	\author[R. Trombetti]{Rocco Trombetti\textsuperscript{\,1}}
	\email[R. Trombetti]{rtrombet@unina.it}
	\author[Y. Zhou]{Yue Zhou\textsuperscript{\,2,3,$\dagger$}}
	\address{\textsuperscript{2}School of Mathematics and Information Science, Guangzhou University, 510006 Guangzhou, China}
	\address{\textsuperscript{3}Department of Mathematics, National University of Defense Technology, 410073 Changsha, China}
	\address{\textsuperscript{$\dagger$}Corresponding author}
	\email[Y. Zhou]{yue.zhou.ovgu@gmail.com}
	\date{\today}

	\begin{abstract}
	Let $\cC$ be a set of $m$ by $n$ matrices over $\F_q$ such that the rank of $A-B$ is at least $d$ for all distinct $A,B\in \cC$. Suppose that $m\leqslant n$. If $\#\cC= q^{n(m-d+1)}$, then $\cC$ is a maximum rank distance (MRD for short) code. Until 2016, there were only two known constructions of MRD codes for arbitrary $1<d<m-1$. One was found by Delsarte (1978) and Gabidulin (1985) independently, and it was later generalized by Kshevetskiy and Gabidulin (2005). We often call them (generalized) Gabidulin codes. Another family was recently obtained by Sheekey (2016), and its elements are called twisted Gabidulin codes. In the same paper, Sheekey also proposed a generalization of the twisted Gabidulin codes. However the equivalence problem for it is not considered, whence it is not clear whether there exist new MRD codes in this generalization. We call the members of this putative larger family generalized twisted Gabidulin codes. In this paper, we first compute the Delsarte duals and adjoint codes of them, then we completely determine the equivalence between different generalized twisted Gabidulin codes. In particular, it can be proven that, up to equivalence, generalized Gabidulin codes and twisted Gabidulin codes are both proper subsets of this family.
	\end{abstract}
	\keywords{maximum rank distance code; linearized polynomial; semifield}
	\maketitle
\section{Introduction}
Let $\K$ be a field. Clearly, the set $\K^{m\times n}$ of all $m\times n$ matrices over $\K$ is a $\K$-vector space. The \emph{rank metric distance} on the $\K^{m\times n}$ is defined by $d(A,B)=\mathrm{rank}(A-B)$ for $A,B\in \K^{m\times n}$.

A subset $\cC\subseteq \K^{m\times n}$ is called a \emph{rank metric code}. The \emph{minimum distance} of $\cC$ is 
\[d(\cC)=\min_{A,B\in \cC, A\neq B} \{d(A,B)\}.\]
When $\cC$ is a $\K$-linear subspace of $\K^{m\times n}$, we say that $\cC$ is a $\K$-linear code and its dimension $\dim_{\K}(\cC)$ is defined to be the dimension of $\cC$ as a subspace over $\K$. In this paper, we restrict ourselves to the cases $\K=\F_q$, where $\F_q$ denotes a finite field of order $q$.  

Let $\cC\subseteq \F_q^{m\times n}$. When $d(\cC)=d$, it is well-known that 
$$\#\cC\le q^{\max\{m,n\}(\min\{m,n\}-d+1)},$$
which is the Singleton-like bound for the rank metric distance; see \cite{delsarte_bilinear_1978}. When the equality holds, we call $\cC$ a \emph{maximum rank distance} (MRD for short) code. MRD codes have various applications in communications and cryptography; for instance, see \cite{gabidulin_public-key_1995,koetter_coding_2008}. More properties of MRD codes can be found in \cite{delsarte_bilinear_1978,gabidulin_MRD_1985,gadouleau_properties_2006,morrison_equivalence_2014,ravagnani_rank-metric_2015}.

A trivial example $\cC$ of MRD codes in $\F_q^{m\times n}$ with $d(\cC)=m$ (here $m\le n$) can be obtained as follows: Take all elements in $a\in \F_{q^n}$, write the linear maps $x\mapsto ax$ as $n\times n$ matrices $M_a$ over $\F_q$. Then we get an MRD code $\cC=\{LM_a: a\in \F_{q^n}\}$, where $L$ is an $m\times n$ matrix of rank $m$. If we replace $ax$ by $a\circ x$ where $\circ$ is the multiplication of a prequasifield of order $q^n$, then still we can get an MRD code. This code is $\F_q$-linear if and only if the prequasifield is a presemifield which is isotopic to a semifield with kernel containing $\F_q$; see \cite{de_la_cruz_algebraic_2016}.

In \cite{delsarte_bilinear_1978} and \cite{gabidulin_MRD_1985}, Delsarte and Gabidulin independently construct the first family of $\F_q$-linear MRD codes of size $q^{nk}$ over finite fields $\F_q$ for every $k$ and $n$. This family is generalized by Kshevetskiy and Gabidulin in \cite{kshevetskiy_new_2005}. If we restrict ourselves to the MRD codes of $n\times n$ matrices, this family can be defined equivalently in the following manner.
\begin{definition}\label{def:GG}
	Let $n,k,s\in \Z^+$ such that $\gcd(n,s)=1$ and $q$ a power of a prime. Then the set 
	\[\cG_{k,s} = \{a_0 x + a_1 x^{q^{s}} + \dots a_{k-1} x^{q^{s(k-1)}}: a_0,a_1,\dots, a_{k-1}\in \F_{q^n} \}\]
	is an $\F_q$-linear MRD code of size $q^{nk}$, which we call a
	\emph{generalized Gabidulin code}.
\end{definition}

Actually, generalized Gabidulin codes are defined in a different way. Under a given basis of $\F_{q^n}$ over $\F_q$, it is well-known that each element $a$ of $\F_{q^n}$ can be written as a (column) vector $\bv(a)$ in $\F_{q}^n$. Let $\alpha_1,\dots,\alpha_m$ be a set of linearly independent elements of $\F_{q^n}$ over $\F_q$, where $k\le m\le n$. Then
\[\left\{ \left(\bv(f(\alpha_1)), \dots, \bv(f(\alpha_m))\right)^T: f\in \cG_{k,s}
\right\}\]
is the original generalized Gabidulin code consisting of $m\times n$ matrices, where $(\cdot )^T$ denotes the transpose of a matrix. To get the minimum distance of this code, we only have to concentrate on the number of the roots of each $f\in \cG_{k,s}$. More precise, it can be shown that every polynomial in $\cG_{k,s}$ has at most $q^{k-1}$ roots. Also, for any $k-1$ dimensional subspace of the space spanned by the $\alpha_i$'s, there always exists a polynomial in $\cG_{k,s}$ vanishing on it, which implies that the rank of the associated matrix is $m-k+1$.

In $\cG_{k,s}$, we see that all its members are of the form $f(x)= \sum_{i=0}^{n-1}a_i x^{q^i}$, where $a_i\in \F_{q^n}$. A polynomial of this form is called a \emph{linearized polynomial} (also a $q$-polynomial because its exponents are all powers of $q$). They are equivalent to $\F_q$-linear transformations from $\F_{q^n}$ to itself. We refer to \cite{lidl_finite_1997} for their basic properties.

In the rest of this paper, we will consider $n\times n$ MRD codes as subsets of linearized polynomials.

Recently, Sheekey \cite{sheekey_new_2016} made a breakthrough in the construction of new linear MRD codes using linearized polynomials.
\begin{definition}\label{def:Sheekey}
	Let $n,k,h\in \Z^+$ and $k<n$. Let $\eta$ be in $\F_{q^n}$ such that $N_{q^n/q}(\eta)\neq (-1)^{nk}$, where $N_{q^n/q}(\eta)= \eta^{1+q+\cdots+q^{n-1}}$. Then the set 
	\[\cH_k(\eta, h) = \{a_0 x + a_1 x^q + \dots a_{k-1} x^{q^{k-1}} + \eta a_0^{q^h} x^{q^k}: a_0,a_1,\dots, a_{k-1}\in \F_{q^n} \}\]
	is an $\F_q$-linear MRD code of size $q^{nk}$, which is called a \emph{twisted Gabidulin code}.
\end{definition}

Another recent progress is a family of nonlinear MRD codes in $\F_q^{3\times 3}$ with minimum distance $2$ constructed by Cossidente, Marino and Pavese in \cite{cossidente_non-linear_2016}, which was later generalized by Durante and Siciliano \cite{durante_nonlinear_MRD_2017}.

After the submission of this paper, Otal and \"Ozbudak~\cite{otal_additive_2016} proved that the twisted Gabidulin codes can be further generalized into additive MRD codes. Moreover, several new families of MRD codes consisting of $n\times n$ matrices have been constructed, including
\begin{itemize}
	\item the non-additive family constructed by Otal and \"Ozbudak in \cite{otal_non-additive_2018};
	\item linear MRD codes associated with maximum scattered linear sets in $\mathrm{PG}(1,q^6)$ and $\mathrm{PG}(1,q^8)$ presented in \cite{csajbok_newMRD_2017} and \cite{csajbok_maximum_arxiv};
	\item the family of linear ones appeared in \cite{trombetti_new_2017} which is related to Hughes-Kleinfeld semifields.
\end{itemize}

In this paper, we investigate a generalization of the twisted Gabidulin codes, which is mentioned in \cite[Remark 8]{sheekey_new_2016}. We call them \emph{generalized twisted Gabidulin codes}. We can show that, up to equivalence, the generalized Gabidulin codes and twisted Gabidulin codes are both quite small proper subsets of this new family of MRD codes.

The organization of this paper is as follows: In Section \ref{sec:dual} we give a brief introduction of the Delsarte dual codes and the adjoint codes of MRD codes. In Section \ref{sec:generalization}, we present an alternative proof of the fact that generalized twisted Gabidulin codes are MRD codes. Finally, we completely determine the equivalence between their different members.

\section{Dual and adjoint codes of MRD codes}\label{sec:dual}
We define a symmetric bilinear form on the set of $m\times n$ matrices by
\[\langle M,N\rangle:= \Tr(MN^T),\]
where $N^T$ is the transpose of $N$. The \emph{Delsarte dual code} of an $\F_q$-linear code $\cC$ is 
\[\cC^\perp :=\{M\in \F_q^{m\times n}:\langle M,N \rangle=0,\forall N\in \cC  \}.\]

One important result proved by Delsarte \cite{delsarte_bilinear_1978} is that the Delsarte dual code of a linear MRD code is still MRD. As we are considering MRD codes using linearized polynomials, for $m=n$ we give the definition of Delsarte dual for polynomials; see \cite{sheekey_new_2016} too. 

We define the bilinear form $b$ on $q$-polynomials by
\[b\left( f,g \right)=\Tr_{q^n/q}\left(\sum_{i=0}^{n-1}a_ib_i\right),\]
where $f(x)=\sum_{i=0}^{n-1}a_ix^{q^i}$ and $g(x)=\sum_{i=0}^{n-1}b_ix^{q^i}\in \F_{q^n}[x]$. The \emph{Delsarte dual code} $\cC^\perp$ of a set of $q$-polynomials $\cC$ is
\[\cC^\perp=\{f: b(f,g)=0,\forall g\in \cC \} .\]

Let $\cC$ be an MRD code in $\K^{m\times n}$. It is obvious that $\{M^T: M\in \cC\}$ is also an MRD code, because the ranks of $M^T$ and $M$ are the same. When $\K=\F_q$ and $m=n$, we can also interpret the transposes of matrices as an operation on linearized polynomials.

Following the terminology in \cite{sheekey_new_2016}, we define the \emph{adjoint} of a linearized polynomial $f(x)=\sum_{i=0}^{n-1}a_i x^{q^i}$ by $\hat{f}(x):=\sum_{i=0}^{n-1}a_{i}^{q^{n-i}} x^{q^{n-i}}$. If $\cC$ is an MRD code consisting of $q$-polynomials, then the \emph{adjoint code} of $\cC$ is $\widehat{\cC}:=\{\hat{f}: f\in\cC\}$. In fact, the adjoint of $f$ is equivalent to the transpose of the matrix derived from $f$. This result can be found in \cite{kantor_commutative_2003}.

\section{Generalized twisted Gabidulin codes}\label{sec:generalization}

In \cite[Remark 7]{sheekey_new_2016}, a generalization of the twisted Gabidulin codes is presented, and its proof relies on the results by Gow and Quinlan in \cite{gow_galois_2009}. In this section, we present an alternative proof.

\begin{lemma}\label{le:restriction}
	Let $q$ be a prime power, and $\delta,s$ and $n\in \Z^+$ such that $\gcd(n,s)=1$ and $\delta<n$. Let $U$ be an $\F_{q^s}$-subspace of $\F_{q^{sn}}$ and $\dim_{\F_{q^s}}(U)=\delta$. Then
	\begin{equation}\label{eq:restriction_U}
		\dim_{\F_q}(U\cap \F_{q^n})\le \delta.
	\end{equation}
\end{lemma}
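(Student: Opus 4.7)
The plan is to show the stronger statement that any $\mathbb{F}_q$-linearly independent subset of $\mathbb{F}_{q^n}^l$ remains $\mathbb{F}_{q^m}$-linearly independent when viewed inside $\mathbb{F}_{q^{mn}}^l$; applying this to a basis of $U\cap \mathbb{F}_{q^n}^l$ over $\mathbb{F}_q$ immediately yields the bound because $U$ has $\mathbb{F}_{q^m}$-dimension $\delta$.

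The crucial ingredient is the coprimality hypothesis $\gcd(m,n)=1$, which forces $\mathbb{F}_{q^m}\cap\mathbb{F}_{q^n}=\mathbb{F}_q$ and, equivalently, makes any $\mathbb{F}_q$-basis $\{\beta_1,\dots,\beta_n\}$ of $\mathbb{F}_{q^n}$ into an $\mathbb{F}_{q^m}$-basis of $\mathbb{F}_{q^{mn}}$ (a counting argument: the $n$ elements are $\mathbb{F}_{q^m}$-linearly independent in $\mathbb{F}_{q^{mn}}$ because any nontrivial $\mathbb{F}_{q^m}$-linear dependence among $\mathbb{F}_{q^n}$-elements would force the $\mathbb{F}_{q^m}$-coefficients to lie in $\mathbb{F}_{q^n}$, hence in $\mathbb{F}_q$, contradicting the $\mathbb{F}_q$-independence of $\{\beta_i\}$). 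I would record this as a preliminary observation.

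Given that, take $w_1,\dots,w_r\in U\cap \mathbb{F}_{q^n}^l$ that are $\mathbb{F}_q$-linearly independent and expand each component in the basis $\{\beta_k\}$: write $w_i=\sum_{k=1}^n \beta_k c_{ik}$ with $c_{ik}\in \mathbb{F}_q^l$. A hypothetical relation $\sum_{i=1}^r \alpha_i w_i=0$ with $\alpha_i\in\mathbb{F}_{q^m}$ rearranges, in each coordinate $j$, to $\sum_k \beta_k\bigl(\sum_i \alpha_i (c_{ik})_j\bigr)=0$ with the inner sums in $\mathbb{F}_{q^m}$; by the preliminary observation each inner sum vanishes, so $\sum_i \alpha_i c_{ik}=0$ in $\mathbb{F}_{q^m}^l$ for every $k$. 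Assembling the $c_{ik}$ into an $r\times(ln)$ matrix $C$ over $\mathbb{F}_q$, the $\mathbb{F}_q$-independence of $\{w_i\}$ is precisely the statement that $C$ has rank $r$ over $\mathbb{F}_q$, which is preserved under the scalar extension to $\mathbb{F}_{q^m}$; thus $\alpha^{T}C=0$ forces $\alpha=0$, and so $w_1,\dots,w_r$ are $\mathbb{F}_{q^m}$-linearly independent in $U$. This gives $r\le\delta$, completing the proof.

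The only step that requires any real care is the preliminary observation that an $\mathbb{F}_q$-basis of $\mathbb{F}_{q^n}$ remains $\mathbb{F}_{q^m}$-independent over $\mathbb{F}_{q^{mn}}$; everything else is bookkeeping via the chosen basis and the invariance of matrix rank under field extension. I would expect the author's alternative proof to package the same coprimality input, perhaps via the tensor decomposition $\mathbb{F}_{q^{mn}}\cong \mathbb{F}_{q^m}\otimes_{\mathbb{F}_q}\mathbb{F}_{q^n}$, rather than invoking the Gow--Quinlan machinery used in Sheekey's original argument.
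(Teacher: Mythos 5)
Your proof is correct, and it takes a genuinely different route from the paper's. The paper argues by contradiction with Moore matrices: if $u_0,\dots,u_\delta\in U\cap\F_{q^n}^l$ were $\F_q$-independent, the $q$-power Moore matrix $(u_j^{q^i})$ would have a nonzero $(\delta+1)\times(\delta+1)$ minor, whereas the $q^m$-power Moore matrix has all such minors zero because $\delta+1$ vectors in the $\delta$-dimensional $\F_{q^m}$-space $U$ are $\F_{q^m}$-dependent; since $\gcd(m,n)=1$ makes $i\mapsto mi \bmod n$ a permutation, the second matrix is a row permutation of the first, a contradiction. You instead prove directly that $\F_q$-independent vectors of $\F_{q^n}^l$ stay $\F_{q^m}$-independent, using linear disjointness (an $\F_q$-basis of $\F_{q^n}$ is an $\F_{q^m}$-basis of $\F_{q^{mn}}$, i.e.\ $\F_{q^{mn}}\cong\F_{q^m}\otimes_{\F_q}\F_{q^n}$) plus invariance of matrix rank under field extension; both arguments use coprimality in exactly one place. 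Your version is arguably cleaner for $l>1$ (the paper's Moore matrices are written as though the $u_i$ were scalars, and one must read the entries componentwise), and it does not need the hypothesis $\delta\le l$. The one step to expand is your parenthetical justification of the preliminary observation: the claim that a nontrivial $\F_{q^m}$-dependence forces the coefficients into $\F_{q^n}$ is asserted rather than proved. Either argue that the $\F_{q^m}$-span of $\beta_1,\dots,\beta_n$ is a subring, hence a subfield, of $\F_{q^{mn}}$ containing $\F_{q^m}$ and $\F_{q^n}$, hence all of $\F_{q^{mn}}$, and conclude by counting $n=[\F_{q^{mn}}:\F_{q^m}]$; or take a minimal nontrivial relation normalized so one coefficient is $1$, apply $x\mapsto x^{q^n}$, and deduce from minimality that all coefficients lie in $\F_{q^m}\cap\F_{q^n}=\F_q$, contradicting $\F_q$-independence.
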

\begin{proof}
	Assume that \eqref{eq:restriction_U} is not true, i.e.\ there exist $u_0,u_1,\dots, u_{\delta}\in U\cap \F_{q^n}$ that are  linearly independent over $\F_q$. It implies that
	the $\F_q$-rank of the matrix
	\[M=
	\left(
	  \begin{array}{cccc}
	    u_0 & u_1 & \dots & u_\delta \\
	    u_0^q & u_1^q & \dots & u_\delta^q \\
	    \ldots & \ldots & \ldots & \ldots \\
	    u_0^{q^{n-1}} & u_1^{q^{n-1}} & \dots & u_\delta^{q^{n-1}} \\
	  \end{array}
	\right)
	\]
	is $\delta+1$. Hence there exists at least one $(\delta+1)\times (\delta+1)$ submatrix $N$ of $M$, such that $\det(N)\neq 0$. As $\dim_{\F_{q^s}}(U)=\delta$, the $\F_{q^s}$-rank of the matrix
	\[M'=
	\left(
	  \begin{array}{cccc}
	    u_0 & u_1 & \dots & u_\delta \\
	    u_0^{q^s} & u_1^{q^s} & \dots & u_\delta^{q^s} \\
	    \ldots & \ldots & \ldots & \ldots \\
	    u_0^{q^{s(n-1)}} & u_1^{q^{s(n-1)}} & \dots & u_\delta^{q^{s(n-1)}} \\
	  \end{array}
	\right)
	\]
	is less than or equal to $\delta$. Hence the determinant of any $(\delta+1)\times (\delta+1)$ submatrix of $M'$ is zero.
	
	However, as $\gcd(s,n)=1$, after applying a row permutation on $M$, we get $M'$. It leads to a contradiction on the determinant of $N$.
\end{proof}

\begin{theorem}\label{th:restriction}
	Let $q$ be a prime power, and $s,n\in \Z^+$ such that $\gcd(n,s)=1$. Let $\cM$ be an $\F_{q^s}$-linear MRD code as a set of $q^s$-polynomials over $\F_{q^{sn}}$,  and the dimension of $\cM$ is $k$ where $k<n$. Let $\widetilde{\cM}$ be the intersection of $\cM$ and the set of $q$-polynomials over $\F_{q^n}$. Then  $\widetilde{\cM}$ is an $\F_q$-linear MRD code as a set of $q$-polynomials over $\mathbb{F}_{q^n}$ if and only if $\#\widetilde{\cM}= q^{nk}$.
\end{theorem}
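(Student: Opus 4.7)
The plan is to combine Lemma~\ref{le:restriction} with the Singleton bound: Lemma~\ref{le:restriction} will force $d(\widetilde{\cM})\ge n-k+1$, and the Singleton bound will then control the size.

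First I would view $\widetilde{\cM}$ as an $\F_q$-linear subspace of $\F_q^{n\times n}$. Linearity over $\F_q$ is immediate because $\F_q\subseteq \F_{q^m}\cap \F_{q^n}$, so $\widetilde{\cM}$ is closed under addition and $\F_q$-scalar multiplication. The restriction map $f\mapsto f|_{\F_{q^n}}$ is injective on $\cM$: if some nonzero $f\in \cM$ vanished on $\F_{q^n}$, then $\F_{q^n}\subseteq \ker f$; since $\ker f$ is an $\F_{q^m}$-subspace of $\F_{q^{mn}}$ and $\gcd(m,n)=1$ forces the $\F_{q^m}$-span of $\F_{q^n}$ to equal all of $\F_{q^{mn}}$, we would conclude $\ker f=\F_{q^{mn}}$, contradicting the MRD bound $\dim_{\F_{q^m}}\ker f\le k-1<n$.

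Next, fix a nonzero $f\in \widetilde{\cM}$. The MRD property of $\cM$ gives $\dim_{\F_{q^m}}\ker f\le k-1$, and clearly $\ker(f|_{\F_{q^n}})=\ker f\cap \F_{q^n}$. Applying Lemma~\ref{le:restriction} to $U=\ker f$ (embedding $\ker f$ as the first-coordinate slice of $\F_{q^{mn}}^{k-1}$ if one wishes to respect the hypothesis $\delta\le l$ literally), one obtains
\[
\dim_{\F_q}(\ker f\cap \F_{q^n})\le \dim_{\F_{q^m}}\ker f\le k-1.
\]
Consequently $f|_{\F_{q^n}}$ has $\F_q$-rank at least $n-k+1$, so $d(\widetilde{\cM})\ge n-k+1$.

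To close, the Singleton bound reads $\#\widetilde{\cM}\le q^{n(n-d(\widetilde{\cM})+1)}\le q^{nk}$. If $\#\widetilde{\cM}=q^{nk}$, both inequalities are forced to be equalities, so $d(\widetilde{\cM})=n-k+1$ with Singleton attained, i.e., $\widetilde{\cM}$ is MRD; conversely, an MRD $\widetilde{\cM}$ with the natural minimum distance $n-k+1$ inherited from $\cM$ has size exactly $q^{nk}$ by Singleton equality. The principal technical step is the passage from the $\F_{q^m}$-dimension bound on $\ker f$ in $\F_{q^{mn}}$ to the $\F_q$-dimension bound on $\ker f\cap \F_{q^n}$ in $\F_{q^n}$, which is precisely what Lemma~\ref{le:restriction} provides; once it is in place, the injectivity of restriction and the final Singleton bookkeeping are routine.
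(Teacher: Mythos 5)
Your proof follows the same route as the paper: for each nonzero $f\in\widetilde{\cM}$ the MRD property of $\cM$ bounds $\dim_{\F_{q^m}}(\ker f)$ by $k-1$, Lemma~\ref{le:restriction} transfers this to $\dim_{\F_q}(\ker f\cap\F_{q^n})\le k-1$, and the Singleton bound finishes the argument. The extra details you supply (injectivity of the restriction map, padding $\ker f$ into $\F_{q^{mn}}^{k-1}$ so that the hypothesis $\delta\le l$ of Lemma~\ref{le:restriction} is met literally) are correct and merely make explicit points the paper leaves implicit.
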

\begin{proof}
	Let $f$ be a nonzero element of $\widetilde{\cM}$. That means $f$ is a $q^s$-linearized polynomial with no more than $q^{s{(k-1)}}$ roots. In other words, the kernel of $f$ is an $\F_{q^s}$-linear space of dimension $\delta$, where $\delta<k$. By Lemma \ref{le:restriction}, we know that the dimension of the kernel of $f$ in $\F_{q^n}$ is also less than or equals to $\delta$. Therefore $\widetilde{\cM}$ is MRD if and only if it is of size $q^{nk}$. 
\end{proof}

Using Theorem \ref{th:restriction}, we can generalize the twisted Gabidulin codes.
\begin{theorem}\label{th:generalized_sheekey}
	Let $n,k,s,h\in \Z^+$ satisfying $\gcd(n,s)=1$ and $k<n$. Let $\eta$ be in $\F_{q^n}$ such that $N_{q^{sn}/q^s}(\eta)\neq (-1)^{nk}$. Then the set 
	\[\cH_{k,s}(\eta, h) = \{a_0 x + a_1 x^{q^s} + \dots +a_{k-1} x^{q^{s(k-1)}} + \eta a_0^{q^h} x^{q^{sk}}: a_0,a_1,\dots, a_{k-1}\in \F_{q^n} \}\]
	is an $\F_q$-linear MRD code of size $q^{nk}$.
\end{theorem}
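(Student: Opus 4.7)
The plan is to derive the theorem from Theorem \ref{th:restriction} by realizing $\cH_{k,s}(\eta,h)$ as the restriction $\widetilde{\cM}$ of a twisted Gabidulin code $\cM$ (in Sheekey's sense) built over the larger field $\F_{q^{sn}}$.

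Concretely, I set $m=s$ so that $\gcd(m,n)=1$ as Theorem \ref{th:restriction} requires. Using $\gcd(s,n)=1$, I choose $h'\in\Z^+$ with $sh'\equiv h\pmod{n}$, and write $Q=q^s$. Let $\cM$ be the twisted Gabidulin code obtained from Definition \ref{def:Sheekey} with base field $\F_Q$ and twist exponent $h'$:
\[
  \cM = \Bigl\{b_0 x + b_1 x^{Q} + \dots + b_{k-1} x^{Q^{k-1}} + \eta\, b_0^{Q^{h'}}\, x^{Q^{k}} : b_0,\dots,b_{k-1}\in \F_{Q^n}\Bigr\}.
\]
Since $N_{Q^n/Q}(\eta)=N_{q^{sn}/q^s}(\eta)\ne (-1)^{nk}$ by the standing hypothesis, Definition \ref{def:Sheekey} guarantees that $\cM$ is an $\F_Q$-linear MRD code of size $Q^{nk}=q^{snk}$, so the hypotheses of Theorem \ref{th:restriction} are met.

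Next I identify $\widetilde{\cM}$. The monomials $x,x^Q,\ldots,x^{Q^{k-1}},x^{Q^k}$ appearing above are pairwise distinct, so the coefficients $b_0,\ldots,b_{k-1},\eta b_0^{Q^{h'}}$ are uniquely read off each polynomial of $\cM$. Requiring all of them to lie in $\F_{q^n}$ is therefore equivalent to $b_0,\ldots,b_{k-1}\in\F_{q^n}$, the condition on the leading coefficient being automatic because $\eta\in\F_{q^n}$. For any $b_0\in\F_{q^n}$ one has $b_0^{Q^{h'}}=b_0^{q^{sh'}}=b_0^{q^h}$ since the Frobenius has order $n$ on $\F_{q^n}$ and $sh'\equiv h\pmod n$. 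Hence $\widetilde{\cM}=\cH_{k,s}(\eta,h)$ and $\#\widetilde{\cM}=q^{nk}$, so Theorem \ref{th:restriction} delivers the MRD conclusion.

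The only real subtlety, which the above change of base field dispatches, is that the ambient code $\cM$ is twisted by $b_0^{Q^{h'}}$ rather than by the desired $a_0^{q^h}$; the hypothesis $\gcd(s,n)=1$ is precisely what allows us to solve $sh'\equiv h\pmod n$ and thereby align the two twists on the $\F_{q^n}$-rational elements.
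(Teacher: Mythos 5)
Your proof is correct and follows essentially the same route as the paper: realize $\cH_{k,s}(\eta,h)$ as the restriction to $\F_{q^n}$-coefficients of a Sheekey twisted Gabidulin code over $\F_{q^{sn}}$ and invoke Theorem \ref{th:restriction}. If anything, your explicit choice of $h'$ with $sh'\equiv h\pmod{n}$ is more careful than the paper's own argument, whose ambient code $\overline{\cH}_{k}(\eta,h)$ carries the twist $a_0^{q^h}$ for $a_0\in\F_{q^{sn}}$ and hence is literally an instance of Definition \ref{def:Sheekey} over the base field $\F_{q^s}$ only when $q^h$ is a power of $q^s$ modulo the order of the Frobenius on $\F_{q^{sn}}$; your version sidesteps this by evaluating the twist only on $\F_{q^n}$-rational elements, where $b_0^{Q^{h'}}=b_0^{q^h}$.
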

\begin{proof}
	By Definition \ref{def:Sheekey}, we know that
	\[\overline{\cH}_{k}(\eta,h)=\{a_0 x + a_1 x^{q^s} + \dots + a_{k-1} x^{q^{s(k-1)}} + \eta a_0^{q^h} x^{q^{sk}}: a_0,a_1,\dots, a_{k-1}\in \F_{q^{sn}} \}\] is an MRD code of size $q^{snk}$. Clearly $\cH_{k,s}(\eta, h)\subseteq \overline{\cH}_{k}(\eta, h)$ and $\#\cH_{k,s}(\eta, h)= q^{nk}$. By Theorem \ref{th:restriction}, we complete the proof.
\end{proof}

The MRD codes in Theorem \ref{th:generalized_sheekey} first appeared in \cite[Remark 8]{sheekey_new_2016} and they were denoted by $\cH_k(\mu,h;\sigma)$ which is ${\cH}_{k,s}(\mu,sh)$ with our notation, where $\sigma$ stands for the field automorphism $x\mapsto x^{q^s}$. We call them \emph{generalized twisted Gabidulin codes}.  When $s=1$, $\cH_{k,s}(\eta, h)$ is the twisted Gabidulin code $\cH_{k}(\eta,h)$. When $\eta=0$, $\cH_{k,s}(\eta, h)$ is exactly the generalized Gabidulin code $\cG_{k,s}$.

In particular, when $k=1$, all elements in $\cH_{1,s}(\eta, h)$ are
\[a_0x + \eta a_0^{q^h} x^{q^{s}},\]
for $a_0\in \F_{q^n}$. They actually define the multiplication of a generalized twisted field, which is a presemifield found by Albert \cite{albert_generalized_1961}.

\section{Equivalences of generalized twisted Gabidulin codes}\label{sec:equivalence}
In the literature, there are different definitions of equivalence for rank metric codes; see \cite{de_la_cruz_algebraic_2016,morrison_equivalence_2014}. As we concentrate on MRD codes in the form of linearized polynomials, we use the following definition.
\begin{definition}\label{def:equivalence}
	Let $\cC$ and $\cC'$ be two set of $q$-polynomials over $\F_{q^n}$. They are equivalent if there exist two permutation $q$-polynomials $L_1$, $L_2$ and $\rho\in \Aut(\F_q)$ such that $\cC' =\{ L_1\circ f^\rho \circ L_2(x) : f\in \cC \}$, where $(\sum a_{i}x^{q^i})^\rho:= \sum a_{i}^\rho x^{q^i}$. The \emph{automorphism group} of $\cC$ consists of all $(L_1,L_2,\rho)$ fixing $\cC$.
\end{definition}

It is well-known and also not difficult to show directly that two MRD codes are equivalent if and only if their duals are equivalent.

Let us look at the dual code of $\cH_{k,s}(\eta, h)$. It follows readily that
\[\cH_{k,s}(\eta, h)^\perp =\left\{b_0 x-\frac{1}{\eta} b_0^{q^h}x^{q^{ks}}+\sum_{i=k+1}^{n-1} b_i x^{q^{is}} : b_i\in \F_{q^n} \right\}. \]
By applying $x\mapsto x^{q^{(n-k)s}}$ to the elements of $\cH_{k,s}(\eta, h)^\perp$, we get
\[\left\{-\frac{1}{\eta^{q^{n-ks}}} b_0^{q^{h-ks}}x+\sum_{i=1}^{n-k-1} a_i x^{q^{is}} + b_0^{q^{(n-k)s}}x^{q^{(n-k)s}} : b_0,a_i\in \F_{q^n} \right\}. \]

Multiplying them by some constant and by certain change of variables, we get the following result.
\begin{proposition}\label{prop:dual}
	The Delsarte dual code of $\cH_{k,s}(\eta, h)$ is equivalent to the code $\cH_{n-k,s}(-\eta, n-h)$.
\end{proposition}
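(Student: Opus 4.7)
The plan is to follow through the computation already begun in the paragraph immediately preceding Proposition~\ref{prop:dual} and then reorganise the output into the standard form $\cH_{n-k,s}(\eta',h')$ up to an outer scaling.

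First, I would compute $\cH_{k,s}(\eta,h)^\perp$ directly from the bilinear form $b$. Writing $g(x)=\sum_{i=0}^{n-1}g_i x^{q^i}$ and pairing against a generic $f\in\cH_{k,s}(\eta,h)$, one obtains
\[b(f,g)=\Tr_{q^n/q}\!\Bigl(a_0 g_0 + \sum_{j=1}^{k-1} a_j g_{js} + \eta a_0^{q^h} g_{ks}\Bigr),\]
which must vanish for every choice of $a_0,\dots,a_{k-1}\in\F_{q^n}$. Non-degeneracy of the trace form forces $g_{js}=0$ for $1\le j\le k-1$, while the identity $\Tr(\eta a_0^{q^h} g_{ks})=\Tr(a_0(\eta g_{ks})^{q^{n-h}})$ (Frobenius invariance of the trace) couples $g_0$ and $g_{ks}$ by $g_{ks}=-g_0^{q^h}/\eta$. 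Since $\gcd(s,n)=1$, the free indices are precisely $\{(k+1)s,\dots,(n-1)s\}\pmod n$, and the first displayed expression in the paper drops out.

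Second, I would apply the permutation $q$-polynomial $L_1(x)=x^{q^{n-ks}}$ on the left, i.e.\ $g\mapsto g^{q^{n-ks}}$; this raises every coefficient to the $q^{n-ks}$-power and shifts each exponent of $q$ upward by $n-ks$ modulo $n$, so that the monomial $x^{q^{ks}}$ slides down to $x$ and the monomial $x$ moves to $x^{q^{(n-k)s}}$. Relabelling the free coefficients produces the second displayed expression.

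Finally I would recognise this code as $\cH_{n-k,s}(-\eta^{q^{n-ks}},n-h)$ up to a further scaling. Setting $c_0:=-b_0^{q^{h-ks}}/\eta^{q^{n-ks}}$ and solving $b_0=(-\eta^{q^{n-ks}}c_0)^{q^{ks-h}}$, repeated use of $q^{ns}\equiv 1\pmod n$ on the exponents of $q$ together with $(-1)^{q^{-h}}=-1$ shows that the coefficient of $x^{q^{(n-k)s}}$ equals $-\eta^{q^{n-ks-h}} c_0^{q^{n-h}}$. The twist exponent is already the desired $h'=n-h$, but the factor $-\eta^{q^{n-ks-h}}$ must be promoted to $-\eta^{q^{n-ks}}$. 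Composing with one more outer scaling $L_1(x)=\lambda x$ with $\lambda=\eta^{q^{n-ks}}$ does the job: under the induced relabelling $c_0\mapsto \lambda c_0$, the twist coefficient acquires a factor $\lambda^{1-q^{n-h}}=\eta^{q^{n-ks}-q^{n-ks-h}}$, and the product $\eta^{q^{n-ks}-q^{n-ks-h}}\cdot(-\eta^{q^{n-ks-h}})=-\eta^{q^{n-ks}}$ matches the claimed form. The main obstacle is the exponent bookkeeping modulo $n$ (in particular the repeated collapses using $q^{ns}\equiv 1$) and pinpointing the scalar $\lambda$ that absorbs the residual Frobenius discrepancy; the degenerate case $\eta=0$ is the classical duality $\cG_{k,s}^{\perp}\sim\cG_{n-k,s}$.
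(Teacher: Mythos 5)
Your proposal is correct and follows the paper's own route exactly: compute $\cH_{k,s}(\eta,h)^\perp$ from the trace bilinear form, compose on the left with $x^{q^{n-ks}}$, and then rescale to reach the stated normal form. In fact you supply more detail than the paper, which dismisses the last step as ``multiplying them by some constant and by certain change of variables''; your identification of the scalar $\lambda=\eta^{q^{n-ks}}$ correctly promotes the residual twist coefficient $-\eta^{q^{n-ks-h}}$ to $-\eta^{q^{n-ks}}$.
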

Similarly, it is straightforward to prove the following result about the adjoint code of a generalized twisted Gabidulin code.
\begin{proposition}\label{prop:adjoint}
	The adjoint code of $\cH_{k,s}(\eta,h)$ is equivalent to $\cH_{k,s}(1/\eta, sk-h)$.
\end{proposition}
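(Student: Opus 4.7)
The plan is to compute the adjoint $\hat f$ of a generic $f\in\cH_{k,s}(\eta,h)$ explicitly, then substitute $L_2(x)=x^{q^{ks}}$ into $\hat f$ to shift its support back into the standard exponents $\{0,s,\dots,ks\}$, and finally absorb the residual Frobenius twist on the twisting scalar by a Galois automorphism of $\F_{q^n}$. This produces an explicit triple $(L_1,L_2,\rho)$ realizing the equivalence of Definition~\ref{def:equivalence}.

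For the first step, I would write $f=\sum_{i=0}^{n-1}c_ix^{q^i}$ with the only nonzero $c_i$ sitting at $i\equiv js\pmod n$ for $j\in\{0,\dots,k\}$ (which are pairwise distinct since $\gcd(n,s)=1$ and $k<n$), where $c_{js\bmod n}=a_j$ for $j<k$ and $c_{ks\bmod n}=\eta a_0^{q^h}$. By the formula $\hat f=\sum_i c_{(n-i)\bmod n}^{q^i}x^{q^i}$, the support of $\hat f$ is $\{-js\bmod n:0\le j\le k\}$, and the substitution $x\mapsto L_2(x)$ shifts this to $\{(k-j)s\bmod n\}=\{0,s,\dots,ks\}$. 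After the reindexing $m=k-j$ the polynomial takes the form
\[
\hat f(L_2(x))=\eta^{q^{-ks}}a_0^{q^{h-ks}}\,x+\sum_{m=1}^{k-1}a_{k-m}^{q^{-(k-m)s}}\,x^{q^{ms}}+a_0\,x^{q^{ks}},
\]
with all exponents of $q$ reduced modulo $n$. Renaming $b_0:=\eta^{q^{-ks}}a_0^{q^{h-ks}}$ and $b_m:=a_{k-m}^{q^{-(k-m)s}}$ defines a bijection $\F_{q^n}^k\to\F_{q^n}^k$ on the free parameters, and inverting the relation for $b_0$ yields $a_0=\eta^{-q^{-h}}b_0^{q^{ks-h}}$. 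Hence $\{\hat f\circ L_2:f\in\cH_{k,s}(\eta,h)\}=\cH_{k,s}(\eta^{-q^{-h}},sk-h)$.

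For the second step, I would verify that this last family coincides with the coefficientwise image of $\cH_{k,s}(1/\eta,sk-h)$ under the Galois automorphism $\rho(y):=y^{q^{n-h}}$. Given $g=c_0x+\sum_{m=1}^{k-1}c_mx^{q^{ms}}+\tfrac{1}{\eta}c_0^{q^{sk-h}}x^{q^{ks}}$ in $\cH_{k,s}(1/\eta,sk-h)$, applying $\rho$ coefficientwise and then relabeling $c_m':=c_m^{q^{-h}}$ produces $g^\rho=c_0'x+\sum_{m=1}^{k-1}c_m'x^{q^{ms}}+\eta^{-q^{-h}}(c_0')^{q^{sk-h}}x^{q^{ks}}$, so that $\cH_{k,s}(1/\eta,sk-h)^\rho=\cH_{k,s}(\eta^{-q^{-h}},sk-h)$. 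Combining both steps gives $\widehat{\cH_{k,s}(\eta,h)}=\{g^\rho\circ L_2^{-1}:g\in\cH_{k,s}(1/\eta,sk-h)\}$ with $L_1=\mathrm{id}$ and $L_2^{-1}(x)=x^{q^{n-ks}}$, which is the desired equivalence.

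The only real obstacle is notational bookkeeping: $q$-exponents on $x$ and on coefficients both live in $\Z/n\Z$, and the adjoint operation interleaves them. Adopting a disciplined convention of reducing every $q$-exponent modulo $n$ and tracking which quantity each acts on is essential to avoid off-by-one errors; otherwise only elementary Frobenius manipulation over $\F_{q^n}$ is required.
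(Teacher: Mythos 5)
Your computation is correct and is exactly the ``straightforward'' direct verification the paper has in mind (the paper omits the details, merely asserting the result in analogy with its dual-code computation): the adjoint formula, the shift by $L_2(x)=x^{q^{ks}}$, the reparametrization $b_0=\eta^{q^{-ks}}a_0^{q^{h-ks}}$, and the absorption of the residual twist $\eta^{-q^{-h}}$ versus $\eta^{-1}$ via the coefficientwise Frobenius $y\mapsto y^{q^{-h}}$ (equivalently via the monomials $L_1=x^{q^{n-h}}$, $L_2=x^{q^{h-ks}}$) all check out. No gaps.
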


In \cite{sheekey_new_2016}, Sheekey proved that $\cG_{k,s}$ and $\cH_{k,1}(\eta,h)$ are equivalent if and only if $k\in \{1,n-1\}$ and $h\in \{0,1\}$. The equivalence between $\cH_{k,1}(\eta,h)$ and $\cH_{k,1}(\nu,j)$, and the automorphism groups of $\cG_{k,1}$ and $\cH_{k,1}(\eta,h)$ are also completely determined except for the only case $n=4$ and $k=2$. However, for arbitrary parameters $s$, $t$, $g$, $h$, $\eta$ and $\theta$, the equivalence between $\cH_{k,s}(\eta, g)$ and $\cH_{k,t}(\theta, h)$ is still open. In this section, we aim to give a complete answer to this question.
\begin{theorem}\label{th:equivalence_all}
	Let $n,k,s,t,g,h\in \Z^+$ satisfying $\gcd(n,s)=\gcd(n,t)=1$ and $2\le k\le n-2$. Let $\eta$ and $\theta$ be in $\F_{q^n}$ satisfying $N_{q^{sn}/q^s}(\eta)\neq (-1)^{nk}$ and $N_{q^{tn}/q^t}(\theta)\neq (-1)^{nk}$. The codes $\cH_{k,s}(\eta, g)$ and $\cH_{k,t}(\theta, h)$ are equivalent if and only if one of the following collections of conditions are satisfied:
	\begin{enumerate}[label=(\alph*)]
		\item $s\equiv t \pmod{n}$, $g \equiv h \pmod{n}$ and there exist $c,d\in \F_{q^n}^*$, $\rho\in \Aut(\F_q)$ and an integer $r$ such that 
		$$\theta c^{q^h-1} d^{q^{r+h} - q^{r+sk}} = \eta^{\rho q^r}.$$
		\item $s\equiv -t \pmod{n}$, $g \equiv -h \pmod{n}$ and there exist $c,d\in \F_{q^n}^*$, $\rho\in \Aut(\F_q)$ and an integer $r$ such that
		\[c^{q^{g}-1} d^{q^{r+g} - q^{r+sk}} = \eta^{\rho q^r}\theta^{q^{sk}}.\]
	\end{enumerate}
\end{theorem}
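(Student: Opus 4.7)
The plan is to prove the ``if'' and ``only if'' directions separately; the if direction is a direct calculation, while the only if direction splits into a structural reduction (showing $L_1,L_2$ must be monomials) followed by parameter matching.

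For sufficiency, in case (a) take $L_1(x)=cx^{q^r}$ and $L_2(x)=dx^{q^{n-r}}$ together with $\rho$; the Frobenius shifts cancel so the image of $f\in\cH_{k,s}(\eta,g)$ retains the support $\{0,s,\ldots,sk\}$, and matching the coefficient of $x^{q^{sk}}$ with $\theta b_0^{q^h}$ (where $b_0=ca_0^{\rho q^r}d^{q^r}$) for all $a_0$ forces both $g=h$ and the relation in (a). For case (b) one takes $L_1(x)=cx^{q^{r_1}}$, $L_2(x)=dx^{q^{r_2}}$ with $r_1+r_2\equiv -sk\pmod n$; now the image's progression $\{r_1+r_2+si\}$ matches $\{0,t,\ldots,tk\}$ via the reversed bijection $i\leftrightarrow k-i$, so the $i=k$ term of $f$ lands at $x^{q^0}$ (hence plays the role of $b_0$) while the $i=0$ term lands at $x^{q^{tk}}$. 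The resulting twist relation yields $g=-h$ and the relation in (b).

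For necessity, assume $L_1\circ f^\rho\circ L_2\in\cH_{k,t}(\theta,h)$ for every $f\in\cH_{k,s}(\eta,g)$, and write $L_1(x)=\sum_m c_m x^{q^m}$, $L_2(x)=\sum_j d_j x^{q^j}$. The crucial step is to show $L_1,L_2$ are monomials. For $i\in\{1,\ldots,k-1\}$ and $\alpha\in\F_{q^n}$, the polynomial $\alpha x^{q^{si}}$ lies in $\cH_{k,s}(\eta,g)$ (take $a_0=0$), and its image equals $\sum_{m,j}c_m\alpha^{\rho q^m}d_j^{q^{si+m}}x^{q^{j+si+m}}$. Requiring this polynomial to be supported in $T:=\{0,t,\ldots,tk\}\pmod n$ for every $\alpha\in\F_{q^n}$ forces, by the linear independence of the maps $\alpha\mapsto\alpha^{q^m}$ on $\F_{q^n}$, that $c_m d_j=0$ whenever $j+m+si\notin T$. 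Writing $S_1:=\{m:c_m\ne 0\}$ and $S_2:=\{j:d_j\ne 0\}$, this yields
\[S_1+S_2\;\subseteq\;\bigcap_{i=1}^{k-1}(T-si).\]
A combinatorial analysis of this intersection (using $\gcd(n,s)=\gcd(n,t)=1$ and $k\le n-2$) bounds its size, and supplementing with the constraints from elements $f$ with $a_0\ne 0$, which couple the coefficients at positions $0$ and $sk$ through the twist $\eta a_0^{q^g}$, pins down $|S_1|=|S_2|=1$, so $L_1(x)=cx^{q^{r_1}}$ and $L_2(x)=dx^{q^{r_2}}$.

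Once $L_1,L_2$ are monomial, the image has support $\{r_1+r_2+si:0\le i\le k\}$, which must equal $T$ as a set. Two arithmetic progressions of length $k+1<n$ in $\Z/n$ with common differences $s,t$ (both coprime to $n$) coincide as sets iff $s\equiv\pm t\pmod n$, giving the dichotomy in the theorem. In the case $s\equiv t$, $r_1+r_2\equiv 0$ and the $i=k$ term maps to $x^{q^{sk}}=x^{q^{tk}}$; equating it with $\theta b_0^{q^h}$, where $b_0$ arises from the $i=0$ term, forces $g=h$ and the relation in (a). In the case $s\equiv -t$, $r_1+r_2\equiv -sk$ and the roles of the $i=0$ and $i=k$ terms are interchanged, so the coupling through the twist forces $g=-h$ and the relation in (b). The main obstacle is the monomial reduction: the intersection $\bigcap_{i=1}^{k-1}(T-si)$ is not always a singleton, so the purely combinatorial argument is insufficient and must be combined with the twist-coupling constraints coming from $a_0$ to rule out non-monomial contributions; the remaining identification of parameters is then routine bookkeeping.
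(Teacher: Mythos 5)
Your overall architecture matches the paper's: reduce to showing the equivalence maps are monomial, then match supports and coefficients. The sufficiency computation and the final parameter matching (once $L_1=cx^{q^{r_1}}$, $L_2=dx^{q^{r_2}}$ are known) are correct and essentially identical to what the paper does, including the trick of composing with $x\mapsto x^{q^{sk}}$ (your $r_1+r_2\equiv -sk$) to handle $s\equiv -t$.

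However, the necessity direction has a genuine gap, and it sits exactly where you flag ``the main obstacle.'' Your plan is: bound $S_1+S_2\subseteq\bigcap_{i=1}^{k-1}(T-si)$, then ``supplement with the twist-coupling constraints'' to force $|S_1|=|S_2|=1$, and finally invoke the claim that two arithmetic progressions of length $k+1$ in $\Z/n\Z$ with differences coprime to $n$ translate-coincide only when $s\equiv\pm t$. Neither half survives as stated. First, the AP claim is false for $k=n-2$: a progression of length $n-1$ with difference coprime to $n$ is the complement of a single point, so \emph{any} two such progressions coincide up to translation (e.g.\ $n=5$, $k=3$: $\{0,1,2,3\}$ and $\{0,2,4,6\}=\{0,1,2,4\}$ are translates although $1\not\equiv\pm2\pmod 5$). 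The paper avoids this by passing to Delsarte duals to replace $k$ by $n-k$, a reduction you never invoke. Second, the monomial reduction for $s\not\equiv\pm t$ is not a routine supplement: the combinatorial sieve genuinely fails to isolate a singleton for small parameters (the paper's Theorems \ref{th:main_inequivalence_2} and \ref{th:main_inequivalence_3} show that for $n=5$, $k=2$, the set $A=S_1+S_2=\{-s,t-s,2t-s\}$ with $t\equiv\pm2s$ survives every support constraint), and ruling it out requires writing down the full coefficient equations for $\tau(a_0x+\eta a_0^{q^g}x^{q^{2s}})$ and deriving $N_{q^{5s}/q^s}(\eta)=1$, contradicting the hypothesis. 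Your sketch never explains how the norm condition on $\eta$ enters, yet without it the statement is not even true in that case. So the proposal identifies the correct skeleton but leaves unproved (and in one place asserts something false about) precisely the part that constitutes the bulk of the paper's argument.
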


Theorem \ref{th:equivalence_all} does not cover the case $k=1$, in which $\cH_{k,s}(\eta, g)$ is exactly a generalized twisted field and the equivalence between $\cH_{1,s}(\eta, g)$ and $\cH_{1,t}(\theta, h)$ is exactly the isotopism between two generalized twisted fields. This problem is completely solved; see \cite{albert_isotopy_1961,biliotti_collineation_1999}. For $k=n-1$, we can convert the equivalence between $\cH_{n-1,s}(\eta, g)$ and $\cH_{n-1,t}(\theta, h)$ into the equivalence between their Delsarte duals.

Before considering the equivalence between distinct members in the generalized twisted Gabidulin codes, namely, the proof of Theorem \ref{th:equivalence_all}, we introduce some results and tools.

In the rest of this section, all calculations of integers and indices are taken modulo $n$, because we are essentially considering the terms $x^{q^i}$ of polynomials in $\F_{q^n}[x]/ (x^{q^n}-x)$.

\begin{lemma}\label{le:independent_support}
	Let $L_1, L_2$ be two $q$-polynomials over $\F_{q^n}$ and $\rho\in\Aut(\F_q)$. For $k \in \{0,\dots, n-1\}$, we define $\cM_k = \{ax^{q^k}: a\in\F_{q^n} \}$ and 
	$$I_k= \{i: \text{the }q^i\text{-th coefficient of } L_1 \circ g^\rho \circ L_2 \text{ is not zero for some }g\in \cM_k  \}.$$ 
	Then 
	\[I_{\bar{k}} = I_{k}+\bar{k}-k= \{i+\bar{k}-k \pmod{n}: i\in I_k\}\] 
	for any $k,\bar{k}\in \{0,\dots, n-1\}$.
\end{lemma}
\begin{proof}
	Assume that $L_1(x):= \sum_{i=0}^{n-1}c_i x^{q^i}$ and $L_2(x):= \sum_{i=0}^{n-1}d_i x^{q^i}$, where $c_i$, $d_i\in \F_{q^n}$.  For any $0\leqslant k \leqslant n-1$ and $b\in \F_{q^n}$, let $g(x)= bx^{q^k}$. Recall that $g^\rho(x) := b^\rho x^{q^k}$. By calculation, 
	\begin{align*}
	L_1\circ g^\rho  \circ L_2=& \sum_{j=0}^{n-1} c_j \left(\sum_{i=0}^{n-1} b^\rho d^{q^k}_i x^{q^{i+k} } \right)^{q^j} \\
	=& \sum_{l=0}^{n-1}\left(\sum_{j=0}^{n-1} c_j(b^\rho d^{q^k}_{l-j-k})^{q^j}\right)x^{q^l}\\
	=&\sum_{l=0}^{n-1}\left(\sum_{j=0}^{n-1} (c_j d_{l-j-k}^{q^{j+k}}) b^{\rho q^j}\right)x^{q^l}.
	\end{align*}
	We define $f_{l,k}(y) =\sum_{j=0}^{n-1} (c_j d_{l-j-k}^{q^{j+k}}) y^{q^j}$.	Hence, under $(L_1, L_2,\rho)$, $bx^{q^k}$ is mapped to $\sum_{l=0}^{n-1}f_{l,k}(b^\rho)x^{q^l}$.
	
	By the definition of $I_k$, the polynomial $f_{l,k}$ is constantly zero for each $l\notin I_k\subseteq\{0,1,\dots, n-1\}$. It is clear that the value of $f_{l,k}(y)$ is zero for all $y\in \F_{q^n}$ if and only if $c_j d_{l-j-k}^{q^{j+k}}=0$, i.e., $c_j d_{l-j-k}=0$  for all $j$. Therefore, for any $\bar{k}\in \{0,1,\dots, n-1\}$, $f_{l,\bar{k}}$ is constantly zero if and only if $l\notin I_k+\bar{k}-k$. 
\end{proof}

The observation in Lemma \ref{le:independent_support} shows us that to investigate the equivalence problem for two given sets $\cC_1$ and $\cC_2$ of $q$-polynomials viewed as rank metric codes, sometimes we only have to concentrate on the $q$-degrees of the nonzero terms of the $q$-polynomials appearing in $\cC_1$ and $\cC_2$.

Since we are going to handle the subsets of $q$-degrees such as $I_k$ in Lemma \ref{le:independent_support} in many of the following proofs, here we give them a name.
Let $\cC$ be a set of $q$-polynomial over $\F_{q^n}$. The \emph{universal support} $\cS(\cC)$ of $\cC$ is defined to be the following subset of $\{0,1,\dots, n-1\}$,
\[\cS(\cC) := \{ i: \text{there exists }f\in \cC \text{ such that the }q^i\text{-coefficient of $f$ is not zero} \}.\]
Assume that there exist a subset $B\subseteq\{0,1,\dots, n-1\}$ and a set of permutations $\{h_i: i \in B\}$ on $\F_{q^n}$ such that
\[\left\{\sum_{i \in B}h_i(a) x^{q^{i}} : a\in \F_{q^n}\right\}\subseteq \cC.\]
Then we call $B$ an \emph{independent support} of $\cC$.  For example, $\{0,1\}$ and $\{1\}$ are both independent supports of $\{a_0 x + (a_0^q + a_1) x^q: a_0,a_1 \in \F_{q^n}\}$.

Let $A$ and $B$ be two subsets of $\{0,1,\dots, n-1\}$. We define another subset $A^B$ of $\{0,1,\dots, n-1\}$ by
\[A^B := \{k: \text{there exists a unique pair } (i,j)\in A\times B \text{ such that }k\equiv i+j \pmod{n}\}.\]
For instance, let $n=4$, then $\{1,2\}^{\{0,1\}}= \{1,3\}$. 

Let $\cC_1$ and $\cC_2$ be two set of $q$-polynomials. Assume that $\cC_1$ and $\cC_2$ are equivalent, whence there exist permutation $q$-polynomials $L_1$, $L_2$ and $\rho\in \Aut(\F_q)$ such that $\cC_1 =\{ L_1\circ f^\rho \circ L_2(x) : f\in \cC_2 \}$. We use $\tau$ to denote this map from $\cC_1$ to $\cC_2$, i.e.\ $\tau(f(x)) := L_1\circ f^\rho \circ L_2(x)$. In Lemma \ref{le:independent_support}, the subset $I_k$ is actually $\cS(\tau(\cM_k))$. Moreover, we can prove the following result.

\begin{lemma}\label{le:equivalence}
	Let $\cC_1$ and $\cC_2$ be two set of $q$-polynomials.  Assume that $\cC_1$ and $\cC_2$ are equivalent by $\tau$. Let $A$ be the universal support of $\{\tau(ax) : a\in\F_{q^n}\}$. Then
	\begin{equation}\label{eq:equivalence_lemma}
		A^B \subseteq \cS(\cC_2),
	\end{equation}
	for every independent support $B$ of $\cC_1$.
\end{lemma}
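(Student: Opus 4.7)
The plan is to read off $A$ directly from the polynomials witnessing the equivalence. Since $\cC_1$ and $\cC_2$ are equivalent, fix permutation $q$-polynomials $L_1(x) = \sum_k \ell_k x^{q^k}$, $L_2(x) = \sum_j m_j x^{q^j}$, and $\rho \in \Aut(\F_q)$ such that $\cC_2 = \{L_1 \circ f^\rho \circ L_2 : f \in \cC_1\}$. I will take
$$A := \{\, k + j \pmod{n} : k \in \cS(L_1),\ j \in \cS(L_2)\,\},$$
the Minkowski sum of the two supports viewed inside $\Z/n\Z$. Crucially, $A$ depends only on $L_1, L_2$ and not on any particular $B$.

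For any independent support $B$ of $\cC_1$, there exist permutations $h_i$ $(i \in B)$ of $\F_{q^n}$ with $f(x) = \sum_{i \in B} h_i(a) x^{q^i} \in \cC_1$ for every $a \in \F_{q^n}$. A direct expansion gives
$$L_1 \circ f^\rho \circ L_2(x) \;=\; \sum_{k,\,i,\,j} \ell_k\, h_i(a)^{\rho q^k}\, m_j^{q^{k+i}}\, x^{q^{k+i+j}},$$
the sum ranging over $(k,i,j) \in \cS(L_1) \times B \times \cS(L_2)$. Now pick $n \in A^B$, and let $(r, i_0) \in A \times B$ be the unique pair with $r + i_0 \equiv n \pmod{n}$. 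Every contributing triple satisfies $k + j \in A$ and $i = n - (k+j) \in B$, so uniqueness forces $i = i_0$ and $k + j = r$. The coefficient of $x^{q^n}$ therefore collapses to
$$C_n(a) \;=\; \sum_{k \in \cS(L_1),\, r-k \in \cS(L_2)} \ell_k\, m_{r-k}^{q^{k+i_0}}\, h_{i_0}(a)^{\rho q^k}.$$

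I then view $C_n$ as a $q$-polynomial in the variable $y = h_{i_0}(a)^\rho$: its coefficient of $y^{q^k}$ is $\ell_k\, m_{r-k}^{q^{k+i_0}}$, which is nonzero precisely when $k \in \cS(L_1)$ and $r-k \in \cS(L_2)$. Because $r \in \cS(L_1) + \cS(L_2) = A$, at least one such $k$ exists, so $C_n(y)$ is a nonzero $q$-polynomial. Such a polynomial has degree at most $q^{n-1} < q^n$, so it cannot vanish identically on $\F_{q^n}$; since $\rho$ and $h_{i_0}$ are bijections of $\F_{q^n}$, some $a \in \F_{q^n}$ yields $C_n(a) \neq 0$, placing $n$ in $\cS(\cC_2)$. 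The delicate point is precisely the collapse of the triple sum: it is the uniqueness clause built into the definition of $A^B$ that rules out destructive cancellation between contributions with different values of $k+j$, after which the nonvanishing statement reduces to the standard fact that a nonzero $q$-polynomial of bounded degree is a nonzero function on $\F_{q^n}$.
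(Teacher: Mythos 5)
Your proof is correct and follows essentially the same route as the paper: your $A=\cS(L_1)+\cS(L_2)$ coincides with the paper's choice of $A$ (the universal support of $\{\tau(ax):a\in\F_{q^n}\}$, by the same nonvanishing argument), and the collapse of the triple sum via the uniqueness clause in the definition of $A^B$, followed by the observation that a nonzero $q$-polynomial of degree less than $q^n$ cannot vanish identically on $\F_{q^n}$, is exactly the paper's (much terser) argument spelled out in full. The only quibble is notational: you reuse the letter $n$ both for the extension degree of $\F_{q^n}$ and for the chosen element of $A^B$; rename the latter.
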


\begin{proof}
	Let $B$ be an arbitrary independent support of $\cC_1$. By definition, there exists permutations $h_i$ with $i\in B$ such that
	\[\mathcal{B} = \left\{\sum_{i \in B}h_i(a) x^{q^{i}} : a\in \F_{q^n}\right\}\subseteq \cC_1.\]
	
	By Lemma \ref{le:independent_support}, the universal support of $\{\tau(h_i(a) x^{q^{i}}) : a\in \F_{q^n}\}$ is exactly $A+i$. Now we want to derive some information for the universal support $\cS(\tau(\mathcal{B}))$. In fact, for any $l\in \{0,1,\cdots, n-1\}$, if there exists exactly one $i\in B$ such that $l\in A+i$, then we can guarantee that $l\in \cS(\tau(\mathcal{B}))$. This is actually equivalent to
	\[A^B\subseteq \cS(\tau(\mathcal{B})).\]
	By definition, $\cS(\tau(\mathcal{B}))$ must a subset of $\cS(\cC_2)$, whence \eqref{eq:equivalence_lemma} holds.
\end{proof}

Similarly to the proof of Lemma \ref{le:independent_support}, we can show the following result.
\begin{lemma}\label{le:monomial_equivalence}
	Let $L_1$, $L_2$ be two permutation $q$-polynomials over $\F_{q^n}$ and $\rho \in \Aut(\F_q)$. If
	\[\{bx: b\in\F_{q^n} \} = \{ L_1(a^\rho L_2(x)): a\in\F_{q^n} \},\]
	then $L_1(x)=cx^{q^r}$ and $L_2(x)=dx^{q^{n-r}}$ for some integer $r$ and for $c,d\in \mathbb{F}_{q^n}^*$.
\end{lemma}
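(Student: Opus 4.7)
My plan is a direct coefficient comparison. Write
\[L_1(x) = \sum_{i=0}^{n-1} c_i x^{q^i} \quad\text{and}\quad L_2(x) = \sum_{j=0}^{n-1} d_j x^{q^j},\]
and expand, using $(uv)^{q^i} = u^{q^i} v^{q^i}$, to obtain
\[L_1(a^\rho L_2(x)) = \sum_{k=0}^{n-1}\Biggl(\sum_{i+j\equiv k\pmod{n}} c_i\, d_j^{q^i}\, (a^\rho)^{q^i}\Biggr) x^{q^k}.\]
By hypothesis, for every $a \in \F_{q^n}$ this polynomial in $x$ is a scalar multiple of $x$, so for every $k \not\equiv 0 \pmod{n}$ the coefficient of $x^{q^k}$ must vanish for all $a$. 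Since $a \mapsto a^\rho$ is a bijection of $\F_{q^n}$, this means the $q$-polynomial
\[F_k(y) := \sum_{i=0}^{n-1} c_i\, d_{k-i}^{q^i}\, y^{q^i}\]
vanishes identically on $\F_{q^n}$.

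Now I invoke the standard fact that a nonzero $q$-polynomial of $q$-degree less than $n$ has at most $q^{n-1}$ roots in $\F_{q^n}$. Since $F_k \equiv 0$ on the full field of $q^n$ elements, each $F_k$ must be the zero polynomial, so $c_i\, d_{k-i}^{q^i} = 0$ for every $i$ and every $k \not\equiv 0 \pmod{n}$. Equivalently, whenever $c_i \neq 0$ and $d_j \neq 0$, one must have $i + j \equiv 0 \pmod{n}$.

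To finish, note that $L_1$ and $L_2$ are permutation polynomials, so each has at least one nonzero coefficient; pick $r, s$ with $c_r \neq 0$ and $d_s \neq 0$. The congruence above forces $s \equiv -r \pmod{n}$, i.e., $s = n - r$. If some other $c_{i'} \neq 0$, then $i' + s \equiv 0$ forces $i' = r$; an identical argument shows no other $d_{j'}$ can be nonzero. Hence $L_1(x) = c\, x^{q^r}$ and $L_2(x) = d\, x^{q^{n-r}}$ with $c = c_r$, $d = d_{n-r}$ in $\F_{q^n}^*$. The argument is almost entirely bookkeeping once the expansion is in place; the only step that requires a genuine (though standard) input is the passage from the vanishing of $F_k$ on $\F_{q^n}$ to the vanishing of its coefficients, and this uses only the $q$-degree bound for roots of linearized polynomials, so the role of $\rho$ is immaterial beyond ensuring $a \mapsto a^\rho$ is a bijection.
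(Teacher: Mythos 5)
Your proposal is correct and follows essentially the same route as the paper: expand $L_1(a^\rho L_2(x))$, force the coefficient of $x^{q^k}$ to vanish for all $a$ when $k\not\equiv 0$, deduce $c_i d_{k-i}=0$ from the fact that a $q$-polynomial of $q$-degree less than $n$ vanishing on all of $\F_{q^n}$ is the zero polynomial, and conclude that exactly one $c_r$ and one $d_{n-r}$ survive. The only difference is that you spell out the final bookkeeping (uniqueness of $r$) a bit more explicitly than the paper does.
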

\begin{proof}
	Let $L_1(x)=\sum_{i=0}^{n-1}c_i x^{q^i}$ and $L_2(x)=\sum_{i=0}^{n-1}d_i x^{q^i}$, where $c_i$ and $d_i\in \F_{q^n}$. Then
	\begin{align*}
		L_1(a^\rho L_2(x))=& \sum_{j=0}^{n-1}c_j \left(\sum_{i=0}^{n-1} a^\rho d_i x^{q^i}\right)^{q^j} \\
		=& \sum_{l=0}^{n-1}\left(\sum_{j=0}^{n-1} c_j(a^\rho d_{l-j})^{q^j}\right)x^{q^l}.
	\end{align*}
	It implies that, when $l\neq 0$, the coefficient of $x^{q^l}$ in $L_1(a^\rho L_2(x))$ always equals $0$, i.e.
	\[\sum_{j=0}^{n-1} c_jd_{l-j}^{q^j}a^{\rho q^j}=0,\]
	for all $a\in \F_{q^n}$. That means $c_jd_{l-j}=0$ for all $j$ and $l\neq 0$. Therefore, there exists a unique integer $r$ between $0$ and $n-1$ such that $c_{r}$ and $d_{n-r}$ are nonzero and all remaining $c_i$ and $d_i$ are zero.
\end{proof}

The proof of the necessary part of Theorem \ref{th:equivalence_all} consists of the results in Theorems \ref{th:main_inequivalence_1}, \ref{th:main_inequivalence_2} and \ref{th:main_inequivalence_3}. First, for $s\not\equiv \pm t \pmod{n}$, we exclude many possibilities for the value of $k$.
\begin{theorem}\label{th:main_inequivalence_1}
	Let $n,k,s,t,g,h\in \Z^+$ satisfying $\gcd(n,s)=\gcd(n,t)=1$. Let $\eta$ and $\theta$ be in $\F_{q^n}$ satisfying $N_{q^{sn}/q^s}(\eta)\neq (-1)^{nk}$ and $N_{q^{tn}/q^t}(\theta)\neq (-1)^{nk}$. Assume that $s\not\equiv \pm t \pmod{n}$.
	\begin{enumerate}[label=(\alph*)]
		\item If $\eta=\theta=0$ and $1<k<n-1$, then $\cH_{k,s}(\eta, g)$ and $\cH_{k,t}(\theta, h)$ are not equivalent.
		\item If $\eta=0$, $\theta\neq 0$ and $2<k<n-2$, then $\cH_{k,s}(\eta, g)$ and $\cH_{k,t}(\theta, h)$ are not equivalent.
		\item If both $\eta$ and $\theta$ are nonzero and $2<k<n-2$, then $\cH_{k,s}(\eta, g)$ and $\cH_{k,t}(\theta, h)$ are not equivalent.
	\end{enumerate}
\end{theorem}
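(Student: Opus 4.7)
The plan is to argue by contradiction in each case: assume an equivalence $\tau:\cH_{k,s}(\eta,g)\to\cH_{k,t}(\theta,h)$ and derive $s\equiv\pm t\pmod n$, contradicting the hypothesis. Let $A$ be the universal support of $\{\tau(ax):a\in\F_{q^n}\}$ and $T:=\cS(\cH_{k,t}(\theta,h))$, so $|T|=k$ if $\theta=0$ and $|T|=k+1$ if $\theta\ne 0$. For each $0\le i\le k-1$ the singleton $\{is\}$ is an independent support of $\cH_{k,s}(\eta,g)$, hence by Lemma~\ref{le:equivalence} we have $A+is\subseteq T$. When $\eta\ne 0$, the pair $\{0,ks\}$ is also an independent support (via the one-parameter family $a_0x+\eta a_0^{q^g}x^{q^{sk}}$), and combining $A\triangle(A+ks)\subseteq T$ with $A\subseteq T$ upgrades this to $A+is\subseteq T$ for all $i=0,1,\dots,k$.

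The combinatorial engine of the proof is an AP-rigidity lemma: if a set $U\subseteq\Z/n\Z$ of cardinality $2\le m\le n-2$ can be written simultaneously as $\{c+is:0\le i\le m-1\}$ and as $\{c'+it:0\le i\le m-1\}$ with $\gcd(s,n)=\gcd(t,n)=1$, then $s\equiv\pm t\pmod n$. This follows by comparing the difference multiset $U-U$, which from either representation is $\{iu:-(m-1)\le i\le m-1\}$ with multiplicities $m-|i|$: matching the nonzero elements of maximum multiplicity $m-1$ forces $s\equiv\pm t$.

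Cases (a) and (c) fall out quickly from this. In case (a), $|T|=k$, and $P_a:=a+s\{0,\dots,k-1\}\subseteq T$ forces $P_a=T$ by counting; rigidity with $m=k\le n-2$ then yields $s\equiv\pm t$. In case (c), the upgraded inclusion $a+s\{0,\dots,k\}\subseteq T$ is an AP of length $k+1$ inside the $(k+1)$-set $T$, forcing $T=a+s\{0,\dots,k\}$, and rigidity with $m=k+1\le n-2$ closes the case.

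Case (b), where $\eta=0$ but $\theta\ne 0$, is the most delicate and contains the main obstacle: the joint support $\{0,ks\}$ is unavailable, so we have only $P_a\subseteq T$ with $|T|=k+1$, giving $P_a=T\setminus\{j_at\}$ for some $j_a\in\{0,\dots,k\}$. The boundary subcases $j_a\in\{0,k\}$ make $P_a$ itself a $t$-AP of length $k$, closed by rigidity. The interior subcase $0<j_a<k$ amounts to asking whether $\{0,1,\dots,k\}\setminus\{j\}$ can be an AP in $\Z/n\Z$ with common difference $u=t^{-1}s\not\equiv\pm 1\pmod n$. For $k\le n/2$, I would use a step-analysis: consecutive elements of the purported AP differ in $\Z$ by a fixed integer $d\in\{-k,\dots,k\}$ (since $n\ge 2k$ and $\gcd(u,n)=1$ together force a unique such lift of $u\pmod n$), whence the $\Z$-span of the AP is $(k-1)|d|\le k$, forcing $|d|=1$ for $k\ge 3$ and hence $u\equiv\pm 1\pmod n$. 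For $k>n/2$, this direct argument breaks down (as the example $n=7,k=4,u=3$ shows), and I would appeal to Proposition~\ref{prop:dual}: the Delsarte duals $\cG_{n-k,s}$ and $\cH_{n-k,t}(\theta',h')$ (with $\theta'\ne 0$) form another case-(b) pair, now with parameter $n-k<n/2$, in the range where the step-analysis applies. The hypothesis $k<n-2$ ensures $n-k>2$, keeping the dual within the hypotheses of case (b).
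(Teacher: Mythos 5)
Your cases (a) and (b) are sound, and their combinatorial core (the difference-multiset rigidity lemma for full arithmetic progressions, plus the unique-lift argument for the punctured progression $\{0,\dots,k\}\setminus\{j\}$, with a dualization when $k>n/2$) is a genuinely different and arguably cleaner route than the paper's, which instead walks along the sequence $u,u+l,u+2l,\dots$ until it is forced to jump out of the target interval. But case (c) contains a genuine gap, and it sits exactly where the paper has to work hardest.

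The problem is the sentence ``combining $A\triangle(A+ks)\subseteq T$ with $A\subseteq T$ upgrades this to $A+is\subseteq T$ for all $i=0,1,\dots,k$.'' When $\eta\neq 0$, the singleton $\{0\}$ is \emph{not} an independent support of $\cH_{k,s}(\eta,g)$: every element with nonzero $x$-coefficient $a_0$ also carries the coefficient $\eta a_0^{q^g}$ at $x^{q^{sk}}$, so no one-parameter family supported only on $\{0\}$ exists, and Lemma~\ref{le:equivalence} gives you only $A^{\{0,ks\}}=A\triangle(A+ks)\subseteq T$ together with $A+is\subseteq T$ for $i=1,\dots,k-1$. The inclusion $A\subseteq T$ can genuinely fail: on $A\cap(A+ks)$ the contributions of $\tau(a_0x)$ and $\tau(\eta a_0^{q^g}x^{q^{sk}})$ may cancel identically in $a_0$, and this is not a phantom possibility --- it is precisely the configuration analysed in Theorem~\ref{th:main_inequivalence_3}, where $A=\{-s,t-s,2t-s\}\not\subseteq\{0,t,2t\}$. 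Without the upgrade you only know that $a+s\{1,\dots,k-1\}$, an $s$-progression of length $k-1$, sits inside the $(k+1)$-element set $T$, and that is too weak to force $s\equiv\pm t$ for small $k$: e.g.\ a length-$2$ progression always fits inside a length-$4$ progression of any common difference. One can partially repair this: if $x\in A\setminus T$ then $x\in A\cap(A+ks)$, whence $x\pm is\in T$ for $i=1,\dots,k-1$, giving $2(k-1)\le k+1$ and hence $k\le 3$; so for $k\ge 4$ (and $k\le n/2$, dualizing otherwise) your upgrade does hold. But $k=3$ survives this counting argument, and closing it requires the kind of explicit analysis of the possible sets $A$ and of the pair $\{0,ks\}$ that the paper carries out in its Case~2 of part~(c). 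As written, your proof of (c) is incomplete for $k=3$ and unjustified as stated for all $k$.
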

\begin{proof}
	According to Proposition \ref{prop:dual}, the Delsarte dual of $\cH_{n-k,s}(\eta, g)$ is equivalent to ${\cH}_{k,s}(-\eta,n-g)$.	As two MRD codes are equivalent if and only if their Delsarte duals are equivalent, we can assume that $ k\le \lfloor\frac{n}{2} \rfloor$. As $\gcd(n,t)=1$,  there is an integer $l$ such that $s\equiv lt \pmod{n}$ and $1<l<n-1$.
	
	Assume that $\cH_{k,s}(\eta, g)$ and $\cH_{k,t}(\theta, h)$ are equivalent. We use $\tau$ to denote the map from $\cH_{k,s}(\eta, g)$ to $\cH_{k,t}(\theta, h)$, i.e.\ there exist linearized permutation polynomials $L_1(x)$, $L_2(x)$ and $\rho\in \Aut(\F_q)$ such that $\tau(f(x)) := L_1\circ f^\rho \circ L_2(x)\in \cH_{k,t}(\theta, h)$ for each $f\in \cH_{k,s}(\eta, g)$. As in the proof of Lemma \ref{le:equivalence}, let $A$ be the universal support of $\{\tau(ax) : a\in\F_{q^n}\}$.
	
	\vspace*{2mm}
	\noindent$(a)$ First we look at the case $\eta=\theta=0$, which means that $\cH_{k,s}(\eta, g)$ and $\cH_{k,t}(\theta, h)$ are both generalized Gabidulin codes.
	
	In this case, we take 
	\[\cT_s:=\{\{is\}: i=0,\dots, k-1\},\] 
	which is a set of independent supports of $\cH_{k,s}(0,g)$ and \[\cS(\cH_{k,t}(0, h))= \{it: i=0,\dots, k-1\}.\]
	If $j\in A$, then $j+is\in A^{\{is\}}$ for each $i\in \{0,1,\ldots,k-1\}$. Together with Lemma \ref{le:equivalence}, we have
	\begin{equation}\label{eq:long_eta=theta=0}
		\{j+ is: i=0,\dots, k-1 \}=\bigcup_{B\in \cT_s} \{j\}^B \subseteq \bigcup_{B\in \cT_s} A^B \subseteq \cS(\cH_{k,t}(\theta, h)).
	\end{equation}
	Hence
	\[\{j+ is: i=0,\dots, k-1 \} \subseteq \{it: i=0,\dots, k-1\}.\]
	Letting $j\equiv ut \pmod{n}$ and plugging $s\equiv lt\pmod{n}$ into the above equation, we have
	\begin{equation}\label{eq:short_eta=theta=0}
		\{u+il: i=0,\dots,k-1\}=\{i: i=0, \dots, k-1\}.
	\end{equation}
	
	According to our assumption, we know that $1<l<n-1$. If $l\le \lfloor \frac{n}{2}\rfloor$, then we consider the sequence of integers $u$, $u+l$, $u+2l,\dots$. As both $l$ and $k$ are less than or equal to  $\lfloor \frac{n}{2}\rfloor$, $l\ge 2$ and $u\in \{0,1,\dots, k-1\}$, there must exist an $\alpha\in \{1,\dots, k-1\}$ such that $u+\alpha l \ge k$ and $u+(\alpha -1)l\le k-1$. It is a contradiction to \eqref{eq:short_eta=theta=0}.
	
	If $l\ge \lfloor \frac{n}{2}\rfloor$, then we consider the sequence $u+(k-1)l$, $u+(k-2)l,\dots$, which can be viewed as $u+(k-1)l$, $(u+(k-1)l) + (n-l)$, $(u+(k-1)l) + 2(n-l), \dots$. As $n-l \le \lfloor \frac{n}{2}\rfloor$, similarly to the $l\le \lfloor \frac{n}{2}\rfloor$ case, we can also get a contradiction to \eqref{eq:short_eta=theta=0}.
	
	\vspace*{2mm}
	\noindent$(b)$ Now we consider the case in which $\eta=0$ and $\theta\neq 0$. We take 
	\[\cT_s:=\{\{is\}: i=0,\dots, k-1\},\] 
	which consists of independent supports of $\cH_{k,s}(0,g)$ and \[\cS(\cH_{k,t}(\theta, h))= \{it: i=0,\dots, k\}.\]
	Similarly as in the proof of $(a)$, if $j\in A$, then we have \eqref{eq:long_eta=theta=0} which implies that
	\[\{j+ is: i=0,\dots, k-1 \} \subseteq \{it: i=0,\dots, k\}.\]
	Letting $j\equiv ut \pmod{n}$ and plugging $s\equiv lt\pmod{n}$ into the above equation, we have
	\begin{equation}\label{eq:short_eta=0}
		\{u+il: i=0,\dots,k-1\}\subseteq\{i: i=0, \dots, k\}.
	\end{equation}
	
	As in the proof of $(a)$, we first consider the case $2\le l\le \lfloor \frac{n}{2}\rfloor$. We look at the sequences of integers $u, u+l,\dots,u+(k-1)l$. There must exist an $\alpha$ such that $u+\alpha l > k$ and $u+(\alpha -1)l\le k$, otherwise $u+(k-1)l\le k$ which means $k\le 2$, $l=2$ and $u=0$. It contradicts our assumption that $k> 2$. Hence \eqref{eq:short_eta=0} does not hold. 
	
	For the case $\lfloor \frac{n}{2}\rfloor <l \le n-2$, following the approach in the proof of $(a)$, we can show that \eqref{eq:short_eta=0} is not satisfied.
	
	\vspace*{2mm}
	\noindent$(c)$ Finally,  we consider the case in which both $\eta$ and $\theta$ are nonzero. We take 
	\[\cT_s:=\{\{is\}: i=1,\dots, k-1\},\] 
	which consists of independent supports of $\cH_{k,s}(\eta,g)$ and \[\cS(\cH_{k,t}(\theta, h))= \{it: i=0,\dots, k\}.\]
	Similarly as in the proof of $(a)$, if $j\in A$, then we have 
	\begin{equation}\label{eq:long_eta_theta_nonzero}
		\{j+ is: i=1,\dots, k-1 \}=\bigcup_{B\in \cT_s} \{j\}^B \subseteq \bigcup_{B\in \cT_s} A^B \subseteq \cS(\cH_{k,t}(\theta, h)).
	\end{equation}
	which implies that
	\[\{j+ is: i=1,\dots, k-1 \} \subseteq \{it: i=0,\dots, k\}.\]
	Letting $j\equiv ut \pmod{n}$ and plugging $s\equiv lt\pmod{n}$ into the above equation, we have
	\begin{equation}\label{eq:short_eta_theta_nonzero}
		\{u+il: i=1,\dots,k-1\}\subseteq\{i: i=0, \dots, k\}.
	\end{equation}

	As in the proof of $(a)$, we first consider the case  $2\le l\le \lfloor \frac{n}{2}\rfloor$. From $k,l\le \lfloor\frac{n}{2}\rfloor$ and \eqref{eq:short_eta_theta_nonzero}, we derive that $0\le u+l<\dots<u+(k-1)l\le k$. Noting that there are $k-1$ and $k+1$ integers in $\{u+il: i=1,\dots,k-1\}$ and $\{i: i=0, \dots, k\}$ respectively,  we obtain the following three possible values for $u$ and $k$:
	\begin{enumerate}[label=(\Roman*)]
		\item $u+l=0$ and $k\le\frac{l-u}{l-1} = \frac{2l}{l-1}$, i.e.\
		\[k\le \left\{
		  \begin{array}{ll}
		    4, & \hbox{$l=2$;} \\
		    3, & \hbox{$l=3$;} \\
		    2, & \hbox{$l\ge 4$.}
		  \end{array}
			\right. \]
		\item $u+l=1$ and $k\le \frac{l-u}{l-1} = \frac{2l-1}{l-1}$, i.e.\ \[ k\le \left\{
		\begin{array}{ll}
		    3, & \hbox{$l=2$;} \\
		    2, & \hbox{$l\ge 3$.}
		  \end{array}\right.\]
		\item $u+l=2$ and $k\le \frac{l-u}{l-1} = \frac{2l-2}{l-1}= 2.$
	\end{enumerate}
	According to our assumption on the value of $k$, the case $k=2$ is excluded. 
	
	We take another independent support $B=\{0, ks\}$ of $\cH_{k,s}(\eta,g)$. Next we consider the cases $k=3$ and $k=4$ separately:
	
	\vspace*{2mm}
	\noindent\textbf{Case 1:} Now $k=4$ and $n\ge 2k=8$. From (\RN{1}) we derive that $l=2$, $u=-2$, $j\equiv -2t \pmod{n}$ and $s\equiv 2t \pmod{n}$. That means $A=\{ -2t \}$. By Lemma \ref{le:equivalence},
	\[A^B=\{-2t,6t\}\subseteq \cS =\{0,t,2t,3t,4t\}.\]
	Assume that $i\in \{0,1,\dots, 4\}$ is such that  $-2t\equiv i t \pmod{n}$. It means that $n \mid i+2$ which is less than or equal to $6$. It is a contradiction to $n\ge 8$.
	
	\vspace*{2mm}
	\noindent\textbf{Case 2:} Now $k=3$ and $n\ge 2k =6$. From (\RN{1}) and (\RN{2}), we derive that there are two possible cases:
	\begin{enumerate}[label=(\roman*)]
		\item $l=3$, $u=-3$, $j\equiv -3t \pmod{n}$, $s\equiv 3t \pmod{n}$ and $A=\{-3t\}$;
		\item $l=2$, $u\in \{-l, -l+1\}=\{-2,-1\}$, $j\in \{-2t,-t\}$, $s\equiv 2t \pmod{n}$ and $A\subseteq \{-2t, -t\}$.
	\end{enumerate}
	In case (\rn{1}), we get that
	\[A^B=\{-3t,6t  \}\subseteq \cS=\{0,t,2t,3t\}.\]
	As $n\ge 6$ and $\gcd(n,t)=1$, the only possible value of $-3t$ is $ -3t\equiv 3t \pmod{n}$ which means that $n=6$ and $\gcd(s,n)=\gcd(3t,n)=3$. It is a contradiction.
	
	In case (\rn{2}), we consider three possibilities:
	\begin{itemize}
		\item $A=\{-2t\}$, $A^B=\{-2t,4t  \}\subseteq \cS=\{0,t,2t,3t\}$ which contradicts $n\ge 6$.
		\item $A=\{-t\}$, $A^B=\{-t,5t  \}\subseteq \cS=\{0,t,2t,3t\}$ which again contradicts $n\ge 6$.
		\item $A=\{-t,-2t\}$, $A^B=\{-t,4t  \}\subseteq \cS=\{0,t,2t,3t\}$ and $-2t \equiv 5t \pmod{n}$. That means $n=7$ and $-t\notin \cS$, which contradicts $-t\in A^B\subseteq \cS$.
	\end{itemize}
	Therefore \eqref{eq:short_eta_theta_nonzero} cannot hold, i.e.\ $\cH_{k,s}(\eta, g)$ and $\cH_{k,t}(\theta, h)$ are not equivalent.
	 
	For the case  $\lfloor \frac{n}{2}\rfloor <l \le n-2$, following the approach in the proof of $(a)$ and looking at the sequence $u+(k-1)l,u+(k-2)l,\dots, u+1$, we can again show that \eqref{eq:short_eta_theta_nonzero} does not hold.
\end{proof}

Next, we proceed to investigate the cases in which $k=2$ or $n-2$, and we can prove the following result.

\begin{theorem}\label{th:main_inequivalence_2}
	Let $n,k,s,t,g,h\in \Z^+$ satisfying $\gcd(n,s)=\gcd(n,t)=1$ and $k=2$ or $n-2$. Let $\eta$ and $\theta$ be in $\F_{q^n}$ satisfying $N_{q^{sn}/q^s}(\eta)\neq (-1)^{nk}$ and $N_{q^{tn}/q^t}(\theta)\neq (-1)^{nk}$. Assume that $s\not \equiv \pm t\pmod{n}$. If
	\begin{itemize}
		\item at most one of $\eta$ and $\theta$ is nonzero, or
		\item $n\neq 5$, $\eta$ and $\theta$ are both nonzero,
	\end{itemize}
	then $\cH_{k,s}(\eta, g)$ and $\cH_{k,t}(\theta, h)$ are not equivalent.
\end{theorem}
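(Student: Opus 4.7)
The plan is to follow the template of Theorem~\ref{th:main_inequivalence_1}: by Proposition~\ref{prop:dual}, the case $k=n-2$ reduces to $k=2$, so I assume $k=2$. Suppose $\tau(f)=L_1\circ f^\rho\circ L_2$ realizes an equivalence between $\cH_{2,s}(\eta,g)$ and $\cH_{2,t}(\theta,h)$, and let $A$ denote the universal support of $\{\tau(ax):a\in\F_{q^n}\}$. In each subcase I would (i) identify the appropriate independent supports of $\cH_{2,s}(\eta,g)$, (ii) use Lemma~\ref{le:equivalence} to restrict $s/t\pmod n$ and pin $A$ down to a singleton, and (iii) derive a contradiction either from a direct support mismatch or from the rigid link between the coefficients of $x$ and $x^{q^{2t}}$ in $\cH_{2,t}(\theta,h)$.

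When $\eta=\theta=0$, both codes are generalized Gabidulin codes. Taking the singleton independent supports $\{0\}$ and $\{s\}$ of $\cG_{2,s}$ together with $\cS(\cG_{2,t})=\{0,t\}$, Lemma~\ref{le:equivalence} forces $A$ and $A+s$ into $\{0,t\}$, whence $s\equiv 0,\pm t\pmod n$, contradicting the hypotheses. When exactly one of $\eta,\theta$ vanishes (WLOG $\eta=0$), the same argument against $\cS(\cH_{2,t}(\theta,h))=\{0,t,2t\}$ first yields $s\equiv\pm 2t\pmod n$, and the intersection $\{0,t,2t\}\cap(\{0,t,2t\}-s)$ reduces to a singleton (either $\{0\}$ or $\{2t\}$) for $n\geq 5$, the smaller $n$ being excluded by coprimality combined with $s\not\equiv\pm t$. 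A straightforward generalization of Lemma~\ref{le:monomial_equivalence} with target $\{bx^{q^e}:b\in\F_{q^n}\}$ in place of $\{bx:b\in\F_{q^n}\}$ (same proof, with $e$ replacing $0$) then forces $L_1$ and $L_2$ to be monomials, and substituting into the membership $\tau(ax)\in\cH_{2,t}(\theta,h)$ and using $\theta\neq 0$ forces $\tau(ax)=0$ for all $a$, a contradiction.

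When both $\eta,\theta$ are nonzero and $n\neq 5$, the code $\cH_{2,s}(\eta,g)$ no longer admits $\{0\}$ (nor $\{2s\}$) as an independent support, because the coefficient $\eta a_0^{q^g}$ of $x^{q^{2s}}$ is automatically nonzero whenever $a_0\neq 0$; but it does admit $\{s\}$, $\{0,2s\}$, and $\{0,s,2s\}$. Rerunning the argument of the previous paragraph with $\{s\}$ yields $s\equiv\pm 2t\pmod n$ and pins $A$ to a singleton, and invoking Lemma~\ref{le:equivalence} against $\{0,2s\}$ gives $A^{\{0,2s\}}=A\cup(A+2s)\subseteq\{0,t,2t\}$; hence $\pm 4t$ must lie in $\{0,t,2t\}$ modulo $n$. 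The resulting divisibility conditions combined with $\gcd(n,s)=\gcd(n,t)=1$ and $s\not\equiv\pm t\pmod n$ narrow $n$ down to at most $\{1,2,3,4,5\}$, and only $n=5$ survives, which is excluded by hypothesis. The main obstacle throughout is the bookkeeping of small-$n$ coincidences modulo $n$: one must verify uniformly that $A$ is forced to be a singleton so that the generalized Lemma~\ref{le:monomial_equivalence} applies, and one must rule out $n\leq 4$ on a case-by-case basis using the coprimality and $s\not\equiv\pm t$ assumptions.
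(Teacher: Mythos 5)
Your reduction to $k=2$ and your treatment of the first two subcases (at most one of $\eta,\theta$ nonzero) are sound and essentially parallel the paper's Cases 1 and 3, up to the harmless symmetry of swapping source and target. The genuine gap is in the case where both $\eta$ and $\theta$ are nonzero, and it is exactly where the $n=5$ exception lives. First, you claim that rerunning the previous argument with the independent support $\{s\}$ "yields $s\equiv\pm 2t$ and pins $A$ to a singleton." It does neither: when $\eta\neq 0$ the code $\cH_{2,s}(\eta,g)$ has only the one singleton independent support $\{s\}$ (as you yourself observe, $\{0\}$ and $\{2s\}$ are unavailable), so Lemma \ref{le:equivalence} gives only $A\subseteq\cS-s=\{-s,\,t-s,\,2t-s\}$, which is always a nonempty three-element set and imposes no condition on $s$ and no bound on $|A|$. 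Second, you write $A^{\{0,2s\}}=A\cup(A+2s)$. By the paper's definition $A^{B}$ consists of the residues with a \emph{unique} representation $i+j$, so for $B=\{0,2s\}$ it is the symmetric difference $\bigl(A\cup(A+2s)\bigr)\setminus\bigl(A\cap(A+2s)\bigr)$, not the union. Lemma \ref{le:equivalence} constrains only this smaller set, so requiring the union to lie in $\{0,t,2t\}$ is a strictly stronger condition than equivalence actually implies; a proof built on it is unsound even when its conclusion happens to be true.

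These two slips interact fatally. For $|A|=1$ the union and the symmetric difference coincide, so your computation would be fine there, but nothing forces $|A|=1$, and the paper must run an exhaustive analysis over all seven nonempty subsets of $\{-s,t-s,2t-s\}$. The doubleton cases already require the cancellation in the symmetric difference (e.g.\ $A=\{t-s,2t-s\}$ with $t\equiv 2s$ gives $A^{\{0,2s\}}=\{s,2t+s\}$, not a four-element union), and the triple $A=\{-s,t-s,2t-s\}$ is precisely the case where the cancellations reduce $A^{\{0,2s\}}$ to a two-element set $\{-s,5s\}$ or $\{s,-5s\}$, forcing $n=5$ rather than an outright contradiction. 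With the union convention this case would be dismissed immediately, which would "prove" nonequivalence for $n=5$ as well by an invalid argument --- whereas the support method genuinely cannot settle $n=5$, and the paper needs the separate coefficient computation of Theorem \ref{th:main_inequivalence_3} for that. To repair your argument you must replace the union by the unique-representation set and carry out the subset-by-subset analysis of $A$.
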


\begin{proof}
	Assume that $\cH_{k,s}(\eta, g)$ and $\cH_{k,t}(\theta, h)$ are equivalent. We use $\tau$ to denote one of the equivalence maps from $\cH_{k,s}(\eta, g)$ to $\cH_{k,t}(\theta, h)$. As in Lemma \ref{le:equivalence}, let $A$ be the universal support of $\{\tau(ax) : a\in\F_{q^n}\}$.
	
	According to the assumption, now $k=2$ or $k=n-2$. By Proposition \ref{prop:dual}, the Delsarte dual of $\cH_{n-2,s}(\eta, g)$ is equivalent to $\cH_{2,s}(-\eta, n-g)$.	As two MRD codes are equivalent if and only if their Delsarte duals are equivalent, we only have to consider the case in which $k=2$, i.e.\
	\begin{eqnarray*}
		\cH_{2,s}(\eta, g) &=& \{a_0 x + a_1 x^{q^s} + \eta a_0^{q^g} x^{q^{2s}}: a_0,a_1\in \F_{q^n} \},\\		
		\cH_{2,t}(\theta, h)&=& \{a_0 x + a_1 x^{q^t} + \theta a_0^{q^h} x^{q^{2t}}: a_0,a_1\in \F_{q^n} \}.
	\end{eqnarray*}
	
	Depending on the values of $\eta$ and $\theta$, we divide the proof into three cases.
	
	\vspace*{2mm}
	\noindent{\bf{Case 1.}}  If $\eta=\theta=0$, then we take $\cT:=\{\{0\},\{s\}\}$ which consists of independent supports of $\cH_{2,s}(0,g)$ and $\cS(\cH_{2,t}(0, h))= \{0,t\}$. It is not difficult to see that \eqref{eq:equivalence_lemma} holds for every $B\in \cT$ and $\cS(\cH_{2,t}(0, h))$ if and only if $s\equiv \pm t$. Of course, we can also get this result directly from Theorem \ref{th:main_inequivalence_1} $(a)$.
	
	\vspace*{2mm}
	\noindent{\bf{Case 2.}} If $\eta$ and $\theta$ are both nonzero, the universal support of $\cH_{2,t}(\theta, h)$ is $\cS := \{0,t,2t\}$ and we take $\cT :=\{\{0,2s\}, \{s\}\}$ consisting of the independent supports of $\cH_{2,s}(\eta, g)$.
	
	Assume that $s\not\equiv \pm t\pmod{n}$. We proceed to show the nonexistence of $A$ satisfying \eqref{eq:equivalence_lemma} for every $B\in \cT$ and $\cS$.
	
	First from $A^{\{s\}}\subseteq \cS$, we see that $A\subseteq\{-s,t-s,2t-s\}$. If $A=\{-s\}$, then
	\[A^{\{0,2s\}}=\{-s,s\},\]
	which cannot be a subset of $\cS=\{0,t,2t\}$. Similarly we can show that $A\neq \{t-s\}$ or $\{2t-s\}$.
	
	Assume that $A=\{t-s,2t-s\}$. Then
	\[
	A^{\{0,2s\}}= \left\{
	\begin{array}{ll}
	\{t-s,t+s,2t-s,2t+s\}, & \hbox{$t\not\equiv \pm 2s \pmod{n}$;} \\
	\{s,2t+s\}, & \hbox{$t\equiv 2s,t\not\equiv -2s \pmod{n}$;} \\
	\{t+s,2t-s\}, & \hbox{$t\equiv -2s,t\not\equiv 2s \pmod{n}$;} \\
	\emptyset, & \hbox{$t\equiv2s\equiv -2s\pmod{n}$.}
	\end{array}
		\right.	
	\]
	In the first case, $A^{\{0,2s\}}$ cannot belong to $\cS$. The second case is also not possible, because $t\notin\{ s, 2t+s\}$. The third case cannot hold, because $2t\notin \{t+s,2t-s\}$. The fourth case means $4s=n$, together with the assumption that $\gcd(n,s)=1$ we have $n=4$, $s=1$ and $t=2$, which contradicts the assumption that $\gcd(n,t)=1$. Similarly, we can verify that $A$ is neither $A=\{-s,t-s\}$ nor $A=\{-s,2t-s\}$.
	
	Finally, assume that $A=\{-s, t-s,2t-s\}$. Now we have
	\begin{eqnarray*}
		A^{\{0\}}&=&\{-s,t-s,2t-s\},\\
		A^{\{2s\}}&=&\{s,t+s,2t+s\}.
	\end{eqnarray*}
	As $t-s$ and $t+s$ cannot be in $\cS=\{0,t,2t\}$, from $A^{\{0,2s\}}\subseteq \cS$ we derive that $t\equiv 2s$ or $ -2s\pmod{n}$. If $t\equiv 2s$, we have $A^{\{0,2s\}}=\{-s,5s\}$; if $t\equiv -2s$, we have $A^{\{0,2s\}}=\{s,-5s\}$. As they  should both belong to $\cS=\{0,t,2t\}$, we get $n=5$ which has been excluded in our assumption.
	
	\vspace*{2mm}
	\noindent{\bf{Case 3.}} If $\eta\neq0$ and $\theta=0$, the universal support of $\cH_{2,t}(0, h)$ is $\cS := \{0,t\}$ and we take $\cT :=\{\{0,2s\}, \{s\}\}$ consisting of the independent supports of $\cH_{2,s}(\eta, g)$.
	
	Again we assume that $s\not\equiv \pm t\pmod{n}$ and show the nonexistence of $A$ satisfying \eqref{eq:equivalence_lemma} for every $B\in \cT$ and $\cS$.
	
	First from $A^{\{s\}}\in \cS$, we see that $A\subseteq\{-s,t-s\}$. If $A=\{-s\}$, then $A^{\{0,2s\}}=\{-s,s\}\not\subseteq\{0,t\}=\cS$. If $A=\{t-s\}$, then $A^{\{0,2s\}}=\{t-s,t+s\}\not\subseteq\{0,t\}=\cS$.
	
	If $A=\{-s,t-s\}$, then from
	\begin{eqnarray*}
		A^{\{0\}}&=&\{-s,t-s\},\\
		A^{\{2s\}}&=&\{s,t+s\},
	\end{eqnarray*}
	and $A^{\{0,2s\}}\subseteq \{0,t\}$, we see that $A^{\{0,2s\}}$ must be the empty set. It implies that $t\equiv 2s\equiv -2s \pmod{n}$. Hence $n=4$ and $t=2$, which contradicts the assumption that $\gcd(n,t)=1$.
\end{proof}

In Theorem \ref{th:main_inequivalence_2}, we see that the only open case is $n = 5$ with $\theta\neq 0$ and $\eta\neq 0$. In the next theorem, we concentrate on this case and present an answer to the equivalence problem.

\begin{theorem}\label{th:main_inequivalence_3}
	Let $k,s,t,g,h\in \Z^+$ satisfying $\gcd(5,s)=\gcd(5,t)=1$ and $k=2$ or $3$. Let $\eta$ and $\theta$ be in $\F_{q^5}^*$ satisfying $N_{q^{5s}/q^s}(\eta)\neq (-1)^{5k}$ and $N_{q^{5t}/q^t}(\theta)\neq (-1)^{5k}$. Assume that $s\not \equiv \pm t\pmod{5}$. Then $\cH_{k,s}(\eta, g)$ and $\cH_{k,t}(\theta, h)$ are not equivalent.
\end{theorem}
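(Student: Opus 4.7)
The plan is to dispatch the $n=5$ case that is left open by Theorem~\ref{th:main_inequivalence_2} precisely because, when both $\eta$ and $\theta$ are nonzero, $\{ax:a\in\F_{q^5}\}$ is not contained in $\cH_{k,s}(\eta,g)$; hence the universal support $A$ of $\{\tau(ax)\}$ need not lie in $\cS(\cH_{k,t}(\theta,h))$, and the argument through independent supports alone cannot eliminate the residual configuration $A=\{-s,t-s,2t-s\}$ with $t\equiv\pm 2s\pmod 5$.

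First, Proposition~\ref{prop:dual} reduces $k=3$ to $k=2$ via Delsarte duality. Since $\gcd(s,5)=\gcd(t,5)=1$ and $s\not\equiv\pm t\pmod 5$, the pair $(s\bmod 5,\,t\bmod 5)$ is essentially $(1,2)$ after absorbing the involution $s\mapsto -s$ into changes of variable in $L_1,L_2$ and using the symmetry $s\leftrightarrow t$; the companion case $(1,3)$ is handled identically. Picking up at the surviving configuration of Theorem~\ref{th:main_inequivalence_2}, Case~2, assume $\tau(f)=L_1\circ f^\rho\circ L_2$ realizes an equivalence $\cH_{2,1}(\eta,g)\to\cH_{2,2}(\theta,h)$, with $L_1(x)=\sum_i c_i x^{q^i}$ and $L_2(x)=\sum_j d_j x^{q^j}$, indices modulo~$5$.

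Expand, for $f=a_0x+a_1x^q+\eta a_0^{q^g}x^{q^2}$,
\[\tau(f)=\sum_l\Bigl(\sum_i c_i d_{l-i}^{q^i}a_0^{\rho q^i}+\sum_i c_i d_{l-i-1}^{q^{i+1}}a_1^{\rho q^i}+\sum_i c_i\eta^{\rho q^i}d_{l-i-2}^{q^{i+2}}a_0^{\rho q^{g+i}}\Bigr)x^{q^l}.\]
Imposing vanishing of the $x^{q^l}$-coefficient for $l\in\{1,3\}$ (the complement of $\cS=\{0,2,4\}$) for every $a_0,a_1\in\F_{q^5}$, and separating the contributions of the $\F_{q^5}$-linearly independent monomial bases $\{a_1^{\rho q^i}\}$ and $\{a_0^{\rho q^i}\}$, produces two families of identities: $c_i d_{l-i-1}=0$, and a coupled system relating $c_i d_{l-i}^{q^i}$ to $c_{i-g}\eta^{\rho q^{i-g}}d_{l-i+g-2}^{q^{i-g+2}}$. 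These cut $(\mathrm{supp}(L_1),\mathrm{supp}(L_2))$ down to a short list of bipartite-avoidance configurations on $\Z/5\Z$, which can be enumerated directly.

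For each surviving configuration, compare the nonzero coefficients at $l\in\{0,2,4\}$ of $\tau(f)$ against the required form $b_0x+b_1x^{q^2}+\theta b_0^{q^h}x^{q^4}$, where $b_0,b_1$ are prescribed $q$-polynomial functions of $a_0,a_1$. The rigid nonlinear coupling $b_0\mapsto\theta b_0^{q^h}$ on the target must be compatible with $a_0\mapsto\eta a_0^{q^g}$ on the source under the Frobenius shifts dictated by the chosen supports; matching them should force a polynomial identity in $\eta,\theta,\rho,g,h$ with no admissible solution, occasionally invoking the norm conditions $N_{q^{5s}/q^s}(\eta)\neq(-1)^{5k}$ and $N_{q^{5t}/q^t}(\theta)\neq(-1)^{5k}$ to exclude degenerate subcases. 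I expect the main obstacle to be the bookkeeping across the remaining support configurations: each requires independent inspection, and the argument closes only once one verifies, configuration by configuration, that the resulting identity would force either $\eta=0$, $\theta=0$, or $s\equiv\pm t\pmod 5$, each contradicting our hypotheses.
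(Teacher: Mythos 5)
Your strategy is the same as the paper's: reduce $k=3$ to $k=2$ by Delsarte duality, pick up the residual configuration $A=\{-s,t-s,2t-s\}$ with $t\equiv\pm 2s\pmod 5$ left over from Theorem \ref{th:main_inequivalence_2}, write $L_1=\sum c_ix^{q^i}$, $L_2=\sum d_jx^{q^j}$, expand $\tau(f)$, and extract coefficient identities ($c_id_{l-i-1}=0$ from the image of $a_1x^{q^s}$, plus the coupled relations involving $\eta^{\rho q^{i-g}}$ and $\theta(\cdot)^{q^h}$) that constrain the supports of $L_1$ and $L_2$. That is exactly how the paper proceeds.

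However, as written this is a plan rather than a proof: the entire content of the theorem sits in the ``bookkeeping across the remaining support configurations,'' which you defer. Concretely, since $\#A=3$, the conditions $c_jd_{-j}=0$ and $c_jd_{2s-j}=0$ leave exactly three configurations to kill: (i) two nonzero $c_i$ and two nonzero $d_i$, (ii) three nonzero $c_i$ and one nonzero $d_i$, (iii) one nonzero $c_i$ and three nonzero $d_i$ (with the nonzero indices forced into arithmetic progressions of common difference $2s$). You do not verify any of them, and your prediction of how they fail is off: none of them ends by forcing $\eta=0$, $\theta=0$, or $s\equiv\pm t\pmod 5$. In configuration (i) the coefficient equations directly force a product of two quantities assumed nonzero to vanish; in (ii) and (iii) the relations first pin down $g$ and $h$ (to $2s,4s$ and $0,0$ respectively, modulo $5$) and then combine into
\[\eta^{\rho q^{u}(1+q^{s}+q^{2s}+q^{3s}+q^{4s})}=1,\]
i.e.\ $N_{q^{5s}/q^s}(\eta)=1$, contradicting the hypothesis $N_{q^{5s}/q^s}(\eta)\neq(-1)^{5k}=1$ for $k=2$. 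So the norm condition is not an occasional tool for ``degenerate subcases''; it is the sole obstruction in two of the three configurations, and without carrying these computations out the argument does not close.
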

\begin{proof}
	Without loss of generality, we assume that $k=2$; for the case $k=3$, we consider the equivalence between their dual codes.
	
	Assume that $\cH_{2,s}(\eta, g)$ and $\cH_{2,t}(\theta, h)$ are equivalent. Let $\tau$ denote the equivalence map from $\cH_{2,s}(\eta, g)$ to $\cH_{2,t}(\theta, h)$, and let $A$ be the universal support of $\{\tau(ax) : a\in\F_{q^5}\}$.
	From the proof of Theorem \ref{th:main_inequivalence_2}, we know that $A=\{-s, t-s,2t-s\}$ and $t\equiv \pm 2s \pmod{5}$. Without loss of generality, we assume that $t\equiv 2s \pmod{5}$. Thus $A=\{-s, s, 3s\}$ and 
	\begin{eqnarray*}
		\cH_{2,s}(\eta, g) &=& \{a_0 x + a_1 x^{q^s} + \eta a_0^{q^g} x^{q^{2s}}: a_0,a_1\in \F_{q^5} \},\\		
		\cH_{2,t}(\theta, h)&=& \{b_0 x + b_1 x^{q^{2s}} + \theta b_0^{q^h} x^{q^{4s}}: b_0,b_1\in \F_{q^5} \}.
	\end{eqnarray*}
	
	As in the proof of Lemma \ref{le:monomial_equivalence}, we set 	$L_1(x)=\sum_{i=0}^{4}c_i x^{q^i}$ and $L_2(x)=\sum_{i=0}^{4}d_i x^{q^i}$, where $c_i$ and $d_i\in \F_{q^5}$. Then
	\[	\tau(ax)=L_1(a^\rho L_2(x))=\sum_{l=0}^{4}\left(\sum_{j=0}^{4} c_j(a^\rho d_{l-j})^{q^j}\right)x^{q^l}.\]
	Under $\tau$, the subset $\{a_1x^{q^s}: a_1\in \F_{q^5} \}\subseteq \cH_{2,s}(\eta, g)$ is mapped to
	\[\tau(a_1x^{q^s})=L_1(a_1^\rho (L_2(x))^{q^s})=\sum_{l=0}^{4}\left(\sum_{j=0}^{4} c_ja_1^{\rho{q^j}} d_{l-j-s}^{q^{j+s}}\right)x^{q^l}.\]
	As $\{\tau(a_1x^{q^s}): a_1\in \F_{q^5}\}\subseteq \cH_{2,t}(\theta, h)$, we have 
	\begin{eqnarray*}
		\sum_{j=0}^{4} c_j d_{-j}^{q^{j+s}} a_1^{\rho{q^j}}&= & 0,\\
		\sum_{j=0}^{4} c_j d_{2s-j}^{q^{j+s}} a_1^{\rho{q^j}}&=& 0,\\
		\theta \left(\sum_{j=0}^{4} c_ja_1^{\rho{q^j}} d_{-j-s}^{q^{j+s}}\right)^{q^h} &=& \sum_{j=0}^{4} c_ja_1^{\rho{q^j}} d_{3s-j}^{q^{j+s}}.
	\end{eqnarray*}
	for all $a_1\in \F_{q^5}$. Thus, for every $j\in \{0,1,\dots, 4\}$, we have
	\begin{eqnarray}
	\label{eq:n=5_cd_1}	c_j d_{-j} &=& 0,\\
	\label{eq:n=5_cd_2}	c_j d_{2s-j}&=& 0,\\
	\label{eq:n=5_cd_3}	\theta c_{j-h}^{q^h} d_{-j+h-s}^{q^{j+s}} &=& c_j d_{3s-j}^{q^{j+s}}.
	\end{eqnarray}
	
	Next we look at $\{\tau(a_0x + \eta a_0^{q^g}x^{q^{2s}}): a_0 \in \F_{q^5}\}$. First we compute
	\[\tau(a_0x) = L_1(a_0^\rho L_2(x)) = \sum_{l=0}^{4}\left(\sum_{j=0}^{4} c_j(a_0^\rho d_{l-j})^{q^j}\right)x^{q^l},\]
	and
	\[\tau(\eta a_0^{q^g}x^{q^{2s}}) = L_1((\eta a_0^{q^g})^\rho (L_2(x))^{q^{2s}}) = \sum_{l=0}^{4}\left(\sum_{j=0}^{4} c_j((\eta a_0^{q^g})^\rho d_{l-j-2s}^{q^{2s}})^{q^j}\right)x^{q^l}.\]
	Since the coefficients of $x^{q^s}$ and $x^{q^{3s}}$ of elements in $\cH_{2,t}(\theta, h)$ are always zero, we obtain that
	\[\sum_{j=0}^{4} c_j(a_0^\rho d_{l-j})^{q^j} + \sum_{j=0}^{4} c_j((\eta a_0^{q^g})^\rho d_{l-j-2s}^{q^{2s}})^{q^j} =0,\]
	for all $a_0\in \F_{q^5}$, $l=s$ and $3s$. It implies that the coefficients $c_j d_{l-j}^{q^j} + c_{j-g}\eta^{\rho q^{j-g}} d_{l+g-j-2s}^{q^{2s+j-g}}$ of $a_0^{\rho q^j}$ are all zero for $l=s$ and $3s$. In other words,
	\begin{equation}\label{eq:n=5_cdg_1}
		c_j d_{s-j}^{q^j} + c_{j-g}\eta^{\rho q^{j-g}} d_{g-j-s}^{q^{2s+j-g}}=0,
	\end{equation}
	and
	\begin{equation}\label{eq:n=5_cdg_2}
		c_j d_{3s-j}^{q^j} + c_{j-g}\eta^{\rho q^{j-g}} d_{s+g-j}^{q^{2s+j-g}}=0,
	\end{equation}	
	for all $j$. Furthermore, by looking at the coefficients of $x$ and $x^{q^{4s}}$ of $\tau(a_0x + \eta a_0^{q^g}x^{q^{2s}})$, we have
	\begin{align}\label{eq:n=5_cd_long}
			& \theta \left( \sum_{j=0}^{4} c_j(a_0^\rho d_{-j})^{q^j} + \sum_{j=0}^{4} c_j((\eta a_0^{q^g})^\rho d_{-j-2s}^{q^{2s}})^{q^j}\right)^{q^h}\\
	\nonumber= &\sum_{j=0}^{4} c_j(a_0^\rho d_{4s-j})^{q^j} + \sum_{j=0}^{4} c_j((\eta a_0^{q^g})^\rho d_{2s-j}^{q^{2s}})^{q^j}.
	\end{align}
	
	Next we consider several possible cases of $c_i$ and $d_i$ satisfying \eqref{eq:n=5_cd_1} and \eqref{eq:n=5_cd_2}. As we have proved that there are three elements in $A$, we only have to look at the following three cases.
	\begin{enumerate}[label=(\alph*)]
		\item Exactly two nonzero $c_i$ and two nonzero $d_i$.
		\item Exactly three nonzero $c_i$.
		\item Exactly three nonzero $d_i$.
	\end{enumerate}
	
	\vspace*{1.5mm}
	\noindent\textbf{Case (a):} Assume that $c_u$, $c_v$ and two $d_i$ are nonzero where $u\not\equiv v \pmod{5}$, and all the other $c_i$ and $d_i$ equal zero. By \eqref{eq:n=5_cd_1}, we have $d_{-u}=d_{-v}=0$. By \eqref{eq:n=5_cd_2}, we get $d_{2s-u}=d_{2s-v}=0$. As exactly two $d_i$ are nonzero and $u\not\equiv v \pmod{5}$, we must have
	\[-u \equiv 2s-v \pmod{5},\]
	or
	\[-v \equiv 2s-u \pmod{5}.\]
	Without loss of generality, we take $v=u-2s$. Then we have $c_u, c_{u-2s}$, $d_{-u+s}$ and $d_{-u+3s}$ are nonzero and all the other $c_i$ and $d_i$ are zero. Setting $j=u$ in \eqref{eq:n=5_cdg_1} and \eqref{eq:n=5_cdg_2}, we have
	\[c_u d_{s-u}^{q^u} + c_{u-g}\eta^{\rho q^{u-g}} d_{g-u-s}^{q^{2s+u-g}}=0,\]	
	and	
	\[c_u d_{3s-u}^{q^u} + c_{u-g}\eta^{\rho q^{u-g}} d_{s+g-u}^{q^{2s+u-g}}=0,\]
	from which we derive that $g$ must be congruent to $2s$ modulo $5$. Then we let $j=u-2s$ in \eqref{eq:n=5_cdg_1} and we have
	\[c_{u-2s} d_{3s-u}^{q^{u-2s}} + c_{u-4s}\eta^{\rho q^{u-4s}} d_{3s-u}^{q^{u-2s}}=0.\]
	However, as $c_{u-4s}=0$, from the above equation we derive that
	\[c_{u-2s} d_{3s-u}^{q^{u-2s}}=0,\]
	which is a contradiction to our assumption that $c_{u-2s}$ and $d_{3s-u}$ are nonzero.
	
	\vspace*{1.5mm}
	\noindent\textbf{Case (b):} Assume that there are exactly three nonzero $c_i$. Denote them by $c_u$, $c_v$ and $c_w$. By \eqref{eq:n=5_cd_1}, we get $d_{-u}=d_{-v}=d_{-w}=0$. By \eqref{eq:n=5_cd_2}, we get $d_{2s-u}=d_{2s-v}=d_{2s-w}=0$.  Without loss of generality, we assume that $v=u-2s$. By calculation, we see that there is at most one $d_i$ which is nonzero and we can take $w=u-4s$ without loss of generality. That means $c_u, c_{u-2s}$, $c_{u-4s}$ and $d_{-u+3s}$ are nonzero and all the other $c_i$ and $d_i$ equal zero. Letting $j=u$ in \eqref{eq:n=5_cdg_2}, we have
	\[c_u d_{3s-u}^{q^u} + c_{u-g}\eta^{\rho q^{u-g}} d_{s+g-u}^{q^{2s+u-g}}=0,\]
	from which we derive that $g$ must be congruent to $2s$ modulo $5$. By taking $j=u$ in \eqref{eq:n=5_cdg_2} and $j=u-2s$ in \eqref{eq:n=5_cdg_1}, we have
	\[c_u d_{3s-u}^{q^u} + c_{u-2s}\eta^{\rho q^{u-2s}} d_{3s-u}^{q^{u}}=0,\]
	and 
	\[c_{u-2s} d_{3s-u}^{q^{u-2s}} + c_{u-4s}\eta^{\rho q^{u-4s}} d_{3s-u}^{q^{u-2s}}=0.\]
	Hence,
	\begin{equation}\label{eq:n=5_c_u_1}
		c_u + c_{u-2s}\eta^{\rho q^{u-2s}}=0,
	\end{equation}
	and
	\begin{equation}\label{eq:n=5_c_u-2s}
		c_{u-2s}  + c_{u-4s}\eta^{\rho q^{u-4s}}=0.
	\end{equation}
	Moreover, from \eqref{eq:n=5_cd_3}, we derive that $h=4s$ and
	\begin{equation}\label{eq:n=5_c_u-4s}
		\theta c_{u-4s}^{q^{4s}}= c_u.
	\end{equation}
	Plugging $h=4s$, $g=2s$, the values of $c_i$ and $d_i$ into \eqref{eq:n=5_cd_long}, we have
	\[\theta c_u^{q^{4s}} \eta^{\rho q^{4s+u}} a_0^{\rho q^{u+s}} d_{3s-u}^{q^{u+s}} = c_{u-4s} a_0^{\rho q^{u-4s}}d_{3s-u}^{q^{u-4s}},\]
	for all $a_0$, i.e.\
	\begin{equation}\label{eq:n=5_c_u-4s_theta}
		\theta c_u^{q^{4s}} \eta^{\rho q^{4s+u}} = c_{u-4s}.
	\end{equation}
	From \eqref{eq:n=5_c_u_1}, \eqref{eq:n=5_c_u-2s} and \eqref{eq:n=5_c_u-4s_theta}, we have
	\begin{equation}\label{eq:n=5_case1_eta_1}
		c_u = \eta^{\rho (q^{u-2s} + q^{u-4s} + q^{u+4s})} \theta c_u^{q^{4s}}.
	\end{equation}
	From \eqref{eq:n=5_c_u_1}, \eqref{eq:n=5_c_u-2s} and \eqref{eq:n=5_c_u-4s}, we have
	\begin{equation}\label{eq:n=5_case1_eta_2}
		c_u = \theta c_u^{q^{4s}} \eta^{-\rho(q^{u+2s} + q^{u})}.
	\end{equation}
	Together with \eqref{eq:n=5_case1_eta_1}, we have
	\[\eta^{\rho q^u(1+q^s+q^{2s}+q^{3s}+q^{4s})}=1,\]
	which contradicts the assumption that $N_{q^{5s}/q^s}(\eta)\neq 1$.
	
	\vspace*{1.5mm}
	\noindent\textbf{Case (c):} According to the definitions of adjoint and equivalence of rank metric codes, two codes $\cC_1$ and $\cC_2$ are equivalent if and only if their adjoint codes are equivalent. It is clear that the adjoint of $L_1\circ f\circ L_2$ is $\hat{L}_2 \circ \hat{f} \circ \hat{L}_1$. It follows that the case of exactly three nonzero $d_i$ can be translated to the case of exactly three nonzero $c_i$ which has been already solved in case (b).
\end{proof}

\begin{proof}(Theorem \ref{th:equivalence_all})
	Combining the results of Theorems \ref{th:main_inequivalence_1}, \ref{th:main_inequivalence_2} and \ref{th:main_inequivalence_3}, we complete the proof of the necessity of $s\equiv \pm t \pmod{n}$.
	
	For $s\equiv \pm t \pmod{n}$, we proceed to prove the necessary and sufficient conditions for the equivalence between $\cH_{k,s}(\eta, g)$ and $\cH_{k,t}(\theta, h)$.
	
	We assume that $\cH_{k,s}(\eta, g)$ and $\cH_{k,t}(\theta, h)$ are equivalent. Again, we use $\tau$ to denote one of the equivalence maps from $\cH_{k,s}(\eta, g)$ to $\cH_{k,t}(\theta, h)$. As in Lemma \ref{le:equivalence}, let $A$ be the universal support of $\{\tau(ax) : a\in\F_{q^n}\}$.
	
	\vspace*{2mm}
	\noindent$(a)$ When $s\equiv t \pmod{n}$, $A$ must be equal to $\{0\}$. The reasons are as follows:
	\begin{enumerate}[label=(\roman*)]
		\item When $\eta=\theta=0$, $\cH_{k,s}(\eta,g)$ and $\cH_{k,t}(\theta,h)$ are both generalized Gabidulin codes $\cG_{k,s}$. Let $\cT(\cG_{k,s}):=\{\{is\}: i=0,\dots, k-1\}$, which is a collection of independent supports of $\cG_{k,s}$. The universal support of $\cG_{k,s}$ is $\cS=\{is : i=0,\dots,k-1\}$. It is straightforward to see that if \eqref{eq:equivalence_lemma} holds for $A$, all $B\in\cT(\cG_{k,s})$ and $\cS$, then $A$ must be $\{0\}$.
		\item When $\eta\neq 0$, from $A^{\{s\}}\subseteq \cS(\cH_{k,t}(\theta, h))$ we can derive that $A\subseteq \{-s,0,s\}$, which is equivalent to say that $A^{\{s\}}\subseteq \{0,s,2s\}$ by Lemma \ref{le:independent_support}. We can prove this result as follows: When $k=2$, there is nothing to prove. Assume that $k>2$ and $i_0s\in A^{\{s\}}$. Clearly $i_0\leqslant k$. By way of contradiction, we assume that $i_0>2$.  As $0<k+2-i_0<k$, we have $ax^{q^{s(k+2-i_0)}}\in \cH_{k,s}(\eta,g)$ for all $a\in \F_{q^n}$, i.e., $\{s(k+2-i_0)  \}$ is an independent support of $\cH_{k,s}(\eta,g)$. By Lemma \ref{le:equivalence}, $ A^{\{s(k+2-i_0)  \}}\subseteq \cS(\cH_{k,t}(\theta, h))$. However $i_0s\in A^{\{s\}}$ implies $(k+1)s\in A^{\{s(k+2-i_0)  \}}$ by Lemma \ref{le:independent_support}, which leads to a contradiction.
		
		Next let us show that $A=\{0\}$. First we look at the case in which $k>2$. Assume that $\tau(a_0x^{q^s})=g_0(a_0)x + g_1(a_0)x^{q^s} + g_2(a_0)x^{q^{2s}}$ for certain functions $g_0$, $g_1$ and $g_2$. Noting that $t=s$ and $\{s\}$, $\{2s\}$ are both independent supports of $\cH_{k,t}(\theta, h)$, i.e., $g_1(a_0)x^{q^s}$ and $g_2(a_0)x^{q^{2s}}\in \cH_{k,t}(\theta, h)$, we derive that $g_0(a_0)x\in \cH_{k,t}(\theta, h)$. It means $g_0(a_0)=0$ for each $a_0\in \F_{q^n}$. Hence $-s\notin A$.  Similarly one can show that $s\notin A$ using the assumption that $\eta\neq 0$.
		
		Now we look at the case $k=2$. If $-s\in A$, then under the map $\tau$,  $a_0x$ is mapped to $g_0(a_0)x^{q^{-s}} + g_1(a_0)x + g_2(a_0)x^{q^{s}}$ for certain functions $g_0$, $g_1$ and $g_2$, where $g_0$ is not identically zero. Meanwhile, $\eta a_0^{q^g}x^{q^{2s}}$ is mapped to $\tilde{g}_0(a_0)x^{q^{s}} + \tilde{g}_1(a_0)x^{q^{2s}} + \tilde{g}_2(a_0)x^{q^{3s}}$ for certain functions $\tilde{g}_0$, $\tilde{g}_1$ and $\tilde{g}_2$. 
		
		When $n>4$, we have $k+1=3<n-1$ which means that the term $g_0(a_0)x^{q^{-s}}$ is not constantly zero in $\tau(a_0x +\eta a_0^{q^g}x^{q^{2s}}) $. On the other hand, $\tau(a_0x +\eta a_0^{q^g}x^{q^{2s}}) \in \cH_{2,s}(\theta, h) $ for every $a_0\in \F_{q^n}$. However, there is no term $x^{q^{-s}}$ in any element of $\cH_{2,t}(\theta,h)$. Hence $-s\notin A$. Similarly we can show that $s\notin A$ using the assumption that $\eta\neq 0$. Therefore $A=\{0\}$.
		
		When $n=4$, the proof of $A=\{0\}$ is quite complicated and we put it in the Appendix.
	\end{enumerate}
	
	Now we know that $A=\{0\}$, which means that $\tau$ maps $\{a_0x: a_0\in \F_{q^n} \}$ to itself. By Lemma \ref{le:monomial_equivalence}, we know that $L_1=cx^{q^r}$ and $L_2=dx^{q^{n-r}}$ for certain $r$, $c$ and $d$. Thus the image of $\cH_{k,s}(\eta, g)$ is
	\[\{c a_0^{\rho q^r}d^{q^r}x + a_1 x^{q^s} + \dots + a_{k-1}x^{q^{s(k-1)}} + c\eta^{\rho q^r} a_0^{\rho q^{g+r}} d^{q^{sk+r}} x^{q^{sk}}: a_i\in \F_{q^n}    \}.\]
	It is the same as $\cH_{k,t}(\theta,h)$ if and only if
	\[\theta(c^{q^h-1}a_0^{\rho(q^{r+h}-q^{r+g})}d^{q^{r+h}- q^{sk+r}} )=\eta^{\rho q^r}, \]
	for all $a_0\in \F_{q^n}^*$. That means $h\equiv g \pmod{n}$ and
	\[\theta c^{q^h-1} d^{q^{r+h} - q^{r+sk}} = \eta^{\rho q^r}.\]
	
	\noindent$(b)$ When $s\equiv -t \pmod{n}$, we first apply $x\mapsto x^{q^{sk}}$ on $\cH_{k,t}(\theta, h)$ to get
	\[\{a_0^{q^{sk}}x^{q^{sk}} + a_1^{q^{sk}}x^{q^{s(k-1)}} + \dots + \theta^{q^{sk}}a_0^{q^{h+sk}}x: a_i\in \F_{q^n}   \}. \]
	It equals
	\[\{a_0x + a_1x^{q} + \dots + \theta^{-q^{sk}}a_0^{q^{-h}}x^{q^{sk}}: a_i\in \F_{q^n}   \}. \]
	Using the result for $s\equiv t\pmod{n}$, we have $h\equiv-g \pmod{n}$ and
	\[c^{q^{g}-1} d^{q^{r+g} - q^{r+sk}} = \eta^{\rho q^r}\theta^{q^{sk}}.\qedhere\]
\end{proof}

\begin{remark}
	Theorem \ref{th:equivalence_all} $(a)$ can also be directly used to completely determine the automorphism group of $\cH_{k,s}(\eta, g)$, which consists of $(L_1, L_2, \rho)$ where $L_1(x)=cx^{q^r}$, $L_2(x)=dx^{q^{n-r}}$ with $c,d\in \F_{q^n}^*$ and $\rho \in \Aut(\F_q)$ satisfying
	\[\eta c^{q^g-1} d^{q^{r+g} - q^{r+sk}} = \eta^{\rho q^r}.  \]
	In particular, when $s=1$ and $(n,k)\neq (4,2)$, this automorphism group has been calculated in \cite[Theorem 7]{sheekey_new_2016}, in which $\rho$ is ``decomposed" and its information is contained in $L_1(x):=\alpha x^{p^i}$ and $L_2(x):= \beta x^{p^{-i}}$, where $p=\mathrm{char}(\F_q)$.
\end{remark}
\begin{remark}
	In \cite{de_la_cruz_algebraic_2016}, the equivalence between MRD codes in $\F_q^{n\times n}$ is slightly different from ours. They use the isometries defined on $\F_q^{n\times n}$ by Wan in \cite{wan_geometry_1996} as the equivalence on MRD codes. In the language of linearized polynomials, besides the equivalence (Definition \ref{def:equivalence}) between $\cC'$ and $\cC$, we also have to check the equivalence between $\cC'$ and the adjoint code $\widehat{\cC}$ of $\cC$.
			
	However, even if we use the definition of equivalence on MRD codes in \cite{de_la_cruz_algebraic_2016}, by Theorem \ref{th:equivalence_all}, we can still determine the equivalence between different members of the generalized twisted Gabidulin codes.
\end{remark}

\section*{Appendix}
In this part, we prove that when $n=4$, $k=2$ and $s\equiv t\equiv 1 \pmod{4}$, for each equivalence map $\tau$ from $\cH_{2,1}(\eta, g)$ to $\cH_{2,1}(\theta, h)$, the universal support $A$ of $\{\tau(ax) : a\in\F_{q^4}\}$ equals $\{0\}$, i.e., $\tau$ maps monomials in $\F_{q^4}[x]$ to monomials. We need this result in the proof of Theorem \ref{th:equivalence_all}. It is worth pointing out that this is the only case of the equivalence of twisted Gabidulin codes which is not covered in Theorem 7 in \cite{sheekey_new_2016}.
\begin{proof}
	As in the proof of Theorem \ref{th:main_inequivalence_3}, we set $L_1(x)=\sum_{i=0}^{3}c_i x^{q^i}$ and $L_2(x)=\sum_{i=0}^{3}d_i x^{q^i}$, where $c_i$ and $d_i\in \F_{q^4}$. Then
	\[	\tau(ax)=L_1(a^\rho L_2(x))=\sum_{l=0}^{3}\left(\sum_{j=0}^{3} c_j(a^\rho d_{l-j})^{q^j}\right)x^{q^l}.\]
	Under $\tau$, the subset $\{a_1x^{q}: a_1\in \F_{q^4} \}\subseteq \cH_{2,1}(\eta, g)$ is mapped to
	\[\tau(a_1x^{q})=L_1(a_1^\rho (L_2(x))^{q})=\sum_{l=0}^{3}\left(\sum_{j=0}^{3} c_ja_1^{\rho{q^j}} d_{l-j-1}^{q^{j+1}}\right)x^{q^l}.\]
	As $\{\tau(a_1x^{q}): a_1\in \F_{q^4}\}\subseteq \cH_{2,1}(\theta, h)$, the coefficient of $x^{q^3}$ in $\tau(a_1x^{q})$ should be $0$, which means
	\[ \sum_{j=0}^{3} c_ja_1^{\rho q^j}d_{2-j}^{q^{j+1}}=0,  \]
	for all $a_1\in \F_{q^4}$. It implies that
	\begin{equation}\label{eq:n=4_equivalence_c_jd_{2-j}=0}
		c_jd_{2-j}=0,
	\end{equation}
	for $j=0,1,2,3$. From the coefficients of $x$ and $x^{q^2}$ in $\tau(a_1x^{q})\in  \cH_{2,1}(\theta, h)$, we also derive
	\[ \theta\left( \sum_{j=0}^{3}c_ja_1^{\rho q^j}d_{-j-1}^{q^{j+1}} \right)^{q^h} = \sum_{j=0}^3 c_ja_1^{\rho q^j}d_{1-j}^{q^{j+1}},\]
	for all $a_1\in \F_{q^4}$, which means
	\begin{equation}\label{eq:n=4_equivalence_2}
		\theta c_{j-h}^{q^h} d_{-j+h-1}^{q^{j+1}} = c_jd_{1-j}^{q^{j+1}},
	\end{equation}
	for $j=0,1,2,3$.
	
	We also need to look at $\tau(a_0 x + \eta a_0^{q^g}x^{q^2})$. By calculation,
	\[\tau(a_0 x + \eta a_0^{q^g}x^{q^2}) = 
		\sum_{l=0}^3\left( \sum_{j=0}^{3} c_j(a_0^\rho d_{l-j})^{q^j}\right)x^{q^l} + \sum_{l=0}^3\left( \sum_{j=0}^{3} c_j((\eta a_0^{q^g})^\rho d_{l-j-2}^{q^2})^{q^j}\right)x^{q^l}.
	\]
	As $\tau(a_0 x + \eta a_0^{q^g}x^{q^2})\in \cH_{2,1}(\theta, h)$, the coefficient of $x^{q^3}$, which is
	\[ \sum_{j=0}^{3} c_j(a_0^\rho d_{3-j})^{q^j} + c_j((\eta a_0^{q^g})^\rho d_{1-j}^{q^2})^{q^j},\]
	must equal $0$ for each $a_0\in \F_{q^4}$. It implies
	\begin{equation}\label{eq:n=4_equivalence_3}
		c_j d_{3-j}^{q^j} + c_{j-g}\eta^{(q^{j-g})\rho} d_{1-j+g}^{q^{2+j-g}}=0
	\end{equation}
	for $j=0,1,2,3$. Moreover, by comparing the coefficients of $x$ and $x^{q^2}$ in  $\tau(a_0 x + \eta a_0^{q^g}x^{q^2})$, we have
	\begin{equation*}
	\theta\left(  \sum_{j=0}^{3} c_j(a_0^\rho d_{-j})^{q^j}
			   +  c_j((\eta a_0^{q^g})^\rho d_{-j-2}^{q^2})^{q^j}
			  \right)^{q^h} = \sum_{j=0}^{3} c_j(a_0^\rho d_{2-j})^{q^j}
			  	   +  c_j((\eta a_0^{q^g})^\rho d_{-j}^{q^2})^{q^j}, 
	\end{equation*}
	for all $a_0\in\F_{q^4}$. By \eqref{eq:n=4_equivalence_c_jd_{2-j}=0}, it can be simplified into
	\begin{equation}\label{eq:n=4_equivalence_longest}
	\theta\left(  \sum_{j=0}^{3} c_j(a_0^\rho d_{-j})^{q^j}
			   			  \right)^{q^h} = \sum_{j=0}^{3} c_j((\eta a_0^{q^g})^\rho d_{-j}^{q^2})^{q^j}.
	\end{equation}
	
	By \eqref{eq:n=4_equivalence_c_jd_{2-j}=0}, it is easy to see that among $c_0$, $c_1$, $c_2$ and $c_3$ (resp.\ $d_0$, $d_1$, $d_2$ and $d_3$) there are at most 3 nonzero elements; otherwise all $d_i$'s (resp.\ $c_i$'s) are $0$ which means that $\tau$ is a zero map. It contradicts that $\tau$ is an equivalence map.
	
	Assume that $c_{j_0}\neq 0$. By \eqref{eq:n=4_equivalence_2},
	\begin{equation}\label{eq:n=4_equivalence_2_j_0}
		\theta c_{j_0-h}^{q^h} d_{-j_0+h-1}^{q^{j_0+1}} = c_{j_0}d_{1-j_0}^{q^{j_0+1}}.
	\end{equation}
	Depending on the values of the other $c_{i}$'s, we divide the rest of our proof into 3 cases.
	
	\vspace*{2mm}
	\noindent\textbf{Case (a):} Let us assume that $c_{j_0}$ is the unique nonzero element among the $c_i$'s. Our goal is to show that $d_{-j_0}$ is the unique nonzero one among the $d_i$'s, which means that $\tau$ maps monomials to monomials.
	
	When $h\not\equiv 0\pmod{4}$, from \eqref{eq:n=4_equivalence_2_j_0} we derive that $d_{1-j_0}=0$. From \eqref{eq:n=4_equivalence_3}, it is readily verified $d_{3-j_0}=0$ no matter what the value $g$ is. Together with $d_{2-j_0}=0$ (because of $c_{j_0}\neq 0$ and \eqref{eq:n=4_equivalence_c_jd_{2-j}=0}), we see that $d_{-j_0}$ is the unique nonzero one among the $d_i$'s.
	
	When $h\equiv 0 \pmod{4}$, \eqref{eq:n=4_equivalence_2_j_0} becomes
	\[ \theta c_{j_0} d_{-j_0-1}^{q^{j_0+1}} = c_{j_0}d_{1-j_0}^{q^{j_0+1}},\]
	which means
	\begin{equation}\label{eq:n=4_equivalence_2_j_0_h=0}
		\theta d_{3-j_0}^{q^{j_0+1}} = d_{1-j_0}^{q^{j_0+1}}.
	\end{equation}
	Now $d_{3-j_0}$ must be nonzero, which means $d_{1-j_0}\neq 0$ by \eqref{eq:n=4_equivalence_2_j_0_h=0}; otherwise all $d_i$'s are zero and we get a contradiction. Meanwhile, for $j=j_0$, \eqref{eq:n=4_equivalence_3} becomes
		\[		c_{j_0} d_{3-{j_0}}^{q^{j_0}} + c_{j_0-g}\eta^{(q^{j_0-g})\rho} d_{1-j_0+g}^{q^{2+j_0-g}}=0.\]
	Noting that the first term of the equation above is nonzero, we see that $g\equiv 0\pmod{4}$ and
	\begin{equation}\label{eq:n=4_equivalence_3_h=0} 
			d_{3-j_0}^{q^{j_0}} + \eta^{\rho q^{j_0}} d_{1-j_0}^{q^{2+j_0}}=0.
	\end{equation}
	Furthermore, by \eqref{eq:n=4_equivalence_longest}, we get
	\begin{equation}\label{eq:n=4_equivalence_3_one_ci}
		\theta d_{-j_0}^{q^{j_0}}  = d_{-j_0}^{q^{j_0+2}}\eta^\rho.
	\end{equation}
	From \eqref{eq:n=4_equivalence_2_j_0_h=0} and \eqref{eq:n=4_equivalence_3_h=0} we get
	\[ \frac{1}{\theta} = d_{1-j_0}^{q^{j_0+3}-q^{j_0+1}}(-\eta)^{\rho q^{j_0+1}}.\]
	Together with \eqref{eq:n=4_equivalence_3_one_ci}, we have
	\begin{equation}\label{eq:n=4_equivalence_one_ci_last_contra}
		1= \left(d_{1-j_0}^{q}d_{-j_0}\right)^{q^{j_0+2}-q^{j_0}}(-1) \eta^{\rho (q^{j_0+1}+1)}.
	\end{equation}
	Raise it to its $q^2$-th power, we have
	\[1= \left(d_{1-j_0}^{q}d_{-j_0}\right)^{q^{j_0}-q^{j_0+2}}(-1) \eta^{\rho (q^{j_0+3}+q^2)}.
	\]
	Together with \eqref{eq:n=4_equivalence_one_ci_last_contra}, we get
	\[1=\eta^{\rho (q^{j_0+1}+1+q^{j_0+3}+q^2)}.\]
	It follows that $N_{q^4/q}(\eta)=1$ no matter what value $j_0$ takes. It contradicts the assumption on the value of $\eta$.

	\vspace*{2mm}
	\noindent\textbf{Case (b):}  Let us assume that $c_{j_0+2}\neq 0$ and $c_{j_0\pm 1}=0$. It implies that $d_{2-j_0}=d_{-j_0}=0$. From \eqref{eq:n=4_equivalence_3}, we see that $g\equiv 0,2\pmod{4}$, otherwise all $d_i$'s are zero. 
	
	When $g\equiv 0\pmod{4}$, from \eqref{eq:n=4_equivalence_3} we obtain
	\begin{equation}\label{eq:n=4_equivalence_3_j_0}
		c_{j_0} d_{3-{j_0}}^{q^{j_0}} + c_{{j_0}}\eta^{\rho q^{{j_0}}} d_{1-{j_0}}^{q^{2+{j_0}}}=0,
	\end{equation}
	and
	\begin{equation}\label{eq:n=4_equivalence_3_j_0+2}
		c_{j_0+2} d_{1-{j_0}}^{q^{j_0+2}} + c_{{j_0+2}}\eta^{\rho q^{{j_0+2}}} d_{3-{j_0}}^{q^{{j_0}}}=0.
	\end{equation}
	Now $d_{1-j_0}$ and $d_{3-j_0}$ cannot be $0$, otherwise all $d_i$'s are $0$ which contradicts the definition of $\tau$.
	From \eqref{eq:n=4_equivalence_3_j_0} and \eqref{eq:n=4_equivalence_3_j_0+2} it follows that
	\[ \eta^{\rho(q^{j_0+2}+q^{j_0}) }=1.\]
	Taking $(q+1)$-th powers gives $N_{q^4/q}(\eta)^{\rho q^{j_0}}=1$, contradicting the assumption on $\eta$.
	
	When $g\equiv 2 \pmod{4}$, we can similarly derive that
	\[
			c_{j_0} d_{3-{j_0}}^{q^{j_0}} + c_{{j_0}+2}\eta^{\rho q^{{j_0+2}}} d_{3-{j_0}}^{q^{j_0}}=0,
	\]
	and 
	\[
			c_{j_0+2} d_{1-{j_0}}^{q^{j_0+2}} + c_{j_0}\eta^{\rho q^{{j_0}}} d_{1-{j_0}}^{q^{{j_0}+2}}=0.
	\]
	From them we can also obtain a contradiction to the assumption that $N_{q^4/q}(\eta)\neq 1$.
	
	\vspace*{2mm}
	\noindent\textbf{Case (c):}  Let us assume that $c_{j_0+1}\neq 0$. By \eqref{eq:n=4_equivalence_c_jd_{2-j}=0}, it implies that $d_{1-j_0}=0$. The proof for this case is the most complicated, and we have to deal with each of the $4$ possible values of $h$.
	
	If $h\equiv 0\pmod{4}$, then letting $j=j_0$ and $j_0+1$ respectively in \eqref{eq:n=4_equivalence_2}, we derive that $d_{3-j_0}=0$ and $d_{-j_0}=0$. As $d_{2-j_0}$ also equals $0$, it follows that $L_2(x)=0$ contradicting the definition of the equivalence map $\tau$.
	
	If $h\equiv 1\pmod{4}$, then letting $j=j_0+1$ in \eqref{eq:n=4_equivalence_2}, we have
	\[
		\theta c_{j_0}^{q} d_{-j_0-1}^{q^{j_0+2}} = c_{j_0+1}d_{-j_0}^{q^{j_0+2}},
	\]
	which implies $d_{3-j_0}$ and $d_{-j_0}$ are both nonzero; otherwise all $d_i$'s equal zero which contradicts that $\tau$ is an equivalence map. Moreover, \eqref{eq:n=4_equivalence_longest} becomes
	\[
	\theta ( c_{j_0}(a_0^\rho d_{-j_0})^{q^{j_0+1}}+c_{j_0+1}(a_0^\rho d_{3-j_0})^{q^{j_0+2}})
				   			   = c_{j_0}((\eta a_0^{q^g})^\rho d_{-j_0}^{q^2})^{q^{j_0}}+c_{j_0+1}((\eta a_0^{q^g})^\rho d_{3-j_0}^{q^2})^{q^{j_0+1}}.
	\]
	As the equation above holds for all $a_0$, there must be $g\equiv 1\pmod{4}$. Plugging it and $j=j_0$ into \eqref{eq:n=4_equivalence_3}, we get
	\[c_{j_0}d_{3-j_0}^{q^{j_0}}=0,\]
	because $c_{j_0-1}=0$. It contradicts that $c_{j_0}\neq 0$ and $d_{3-j_0}\neq 0$.
	
	If $h\equiv 2 \pmod{4}$,  then letting $j=j_0+1$ and $j_0+2$ respectively in \eqref{eq:n=4_equivalence_2}, we have
	\begin{equation}\label{eq:n=4_equivalence_case_c_h=2}
		\theta c_{j_0+3}^{q^2} d_{-j_0}^{q^{j_0+2}} = c_{j_0+1}d_{-j_0}^{q^{j_0+2}},
	\end{equation}
	and
	\[
		\theta c_{j_0}^{q^2} d_{3-j_0}^{q^{j_0+3}} = c_{j_0+2}d_{3-j_0}^{q^{j_0+3}}.
	\]
	From these two equations above and \eqref{eq:n=4_equivalence_c_jd_{2-j}=0}, we can derive that there is exactly one nonzero element in $\{d_{-j_0}, d_{3-j_0}\}$. 
	
	We only consider the case in which $d_{-j_0}\neq 0$; the other case can be handled in an analogous way. Now by \eqref{eq:n=4_equivalence_c_jd_{2-j}=0} and \eqref{eq:n=4_equivalence_case_c_h=2}, $c_{j_0+2}=0$ and $c_{j_0+3}\neq 0$. By taking $j=j_0+3$ in \eqref{eq:n=4_equivalence_3}, we have
	\begin{equation}\label{eq:n=4_equivalence_case_c_h=2_longest}
		c_{j_0+3} d_{-j_0}^{q^{j_0+3}} + c_{j_0+3-g}\eta^{(q^{j_0+3-g})\rho} d_{2-j_0+g}^{q^{1+j_0-g}}=0.
	\end{equation}
	As the first term of the equation above is nonzero and $d_{-j_0}$ is the unique nonzero element among the $d_i$'s, there must be $g\equiv 2 \pmod{4}$ and \eqref{eq:n=4_equivalence_case_c_h=2_longest} becomes
	\[		c_{j_0+3} d_{-j_0}^{q^{j_0+3}} + c_{j_0+1}\eta^{\rho q^{j_0+1}} d_{-j_0}^{q^{j_0+3}}=0,\]
	which means
	\[c_{j_0+3}  + c_{j_0+1}\eta^{\rho q^{j_0+1}} =0.\]
	Together with \eqref{eq:n=4_equivalence_case_c_h=2}, we obtain
	\begin{equation}\label{eq:n=4_equivalence_case_c_h=2_last}
		\theta c_{j_0+3}^{q^2-1} = -1/\eta^{\rho {q^{j_0+1}}}.
	\end{equation}
	On the other hand, \eqref{eq:n=4_equivalence_longest} now becomes
	\[	\theta ( c_{j_0}(a_0^\rho d_{-j_0})^{q^{j_0}})^{q^2}
		= c_{j_0}((\eta a_0^{q^2})^\rho d_{-j_0}^{q^2})^{q^{j_0}},\]
	which implies
	\[ \theta c_{j_0}^{q^2} = c_{j_0}\eta^{\rho q^{j_0}} .\]
	Together with \eqref{eq:n=4_equivalence_case_c_h=2_last}, we have
	\[ \left(\frac{c_{j_0+3}}{c_{j_0}}\right)^{q^2-1} = \frac{-1}{(\eta^{\rho q^{j_0}} )^{q+1}},
	\]
	which means that 
	\[N_{q^4/q}\left(\frac{1}{\eta^{\rho q^{j_0}} }
	\right) =  \left(\frac{c_{j_0+3}}{c_{j_0}}\right)^{(q^2-1)(q^2+1)}=1.\]
	It contradicts with the assumption $N_{q^4/q}(\eta)\neq 1$.

	If $h\equiv 3\pmod{4}$, then letting $j=j_0+1$   in \eqref{eq:n=4_equivalence_2}, we have
	\[
		\theta c_{j_0+2}^{q^3} d_{1-j_0}^{q^{j_0+2}} = c_{j_0+1}d_{-j_0}^{q^{j_0+2}}.
	\]	
	Together with $d_{1-j_0}=0$ and $c_{j_0+1}\neq 0$, we derive that $d_{-j_0}=0$. Similarly by letting $j=j_0+3$ in \eqref{eq:n=4_equivalence_2}, we can show that $d_{2-j_0}=0$. It means that $\tau$ is a zero map which contradicts the definition of an equivalence map.
	
	\vspace*{2mm}
	\noindent\textbf{Case (d):}  Let us assume that $c_{j_0-1}\neq 0$. If we replace $j_0$ by $j_0+1$, it turns out to be Case (c), which has been already proved.
\end{proof}

\section*{acknowledgment}
This work is supported by the Research Project of MIUR (Italian Office for University and Research) ``Strutture geometriche, Combinatoria e loro Applicazioni" 2012. Yue Zhou is partially supported by the National Natural Science Foundation of China (No.\ 11771451).

\end{document}